\renewcommand{\epsilon}{\varepsilon}
\renewcommand{\phi}{\varphi}
\newcommand{\bR}{\mathbb{R}} \newcommand{\bN}{\mathbb{N}}
 \newcommand{\cH}{\mathcal{H}}
\newcommand{\cA}{\mathcal{A}}
\newcommand{\cB}{\mathcal{B}} 
\newcommand{\supp}{\operatorname{supp}}
\newcommand{\norm}[2]{\|#1\|_{#2}}
\newcommand{\seminorm}[2]{|#1|_{#2}}
\newcommand{\LRIP}{\operatorname{LRIP}}
\newcommand{\dict}{\mathbf{\Phi}}
\newtheorem{theorem}{Theorem}[section]
\newtheorem{lemma}[theorem]{Lemma}
\newtheorem{proposition}[theorem]{Proposition}
\theoremstyle{definition}
\newtheorem{example}[theorem]{Example}
\theoremstyle{remark}
\newtheorem{remark}[theorem]{Remark}
\numberwithin{equation}{section}
\newcommand{\Span}{\mathop{\rm span}}
\def\ripc{\delta}
\def\redund{R}
\def\slevel{\kappa}
\def\lfbound{A}
\def\ufbound{B}
\title{The restricted isometry property meets nonlinear\protect\\ approximation with redundant frames}
\thanks{R{\'e}mi Gribonval is with INRIA, Centre Inria Rennes - Bretagne Atlantique, Campus de Beaulieu, F-35042 Rennes Cedex, Rennes, France. Phone: +33 2 99 84 25 06. Fax: +33 2 99 84 71 71. Email: remi.gribonval@inria.fr.%\protect\\
%R{\'e}mi Gribonval is a member of the METISS project-team at IRISA, Rennes, France.}
}
\thanks{Morten Nielsen is with the Department of Mathematical Sciences, Aalborg University, Frederik Bajersvej 7G, DK~-~9220 Aalborg East, Denmark. Email: mnielsen@math.aau.dk}
\begin{document}%\nocite{*}
\RRNo{7548}
\makeRR
\allowdisplaybreaks

\section{Introduction}\label{sec:intro}
Data approximation using sparse linear expansions from overcomplete dictionaries has become a central theme in signal and image processing with applications ranging from data acquisition (compressed sensing) to denoising and compression. Dictionaries can be seen as collections of vectors $\{\phi_j\}$ from a Banach space $X$ equipped with a norm $\norm{\cdot}{X}$, and one wishes to approximate data vectors $f$ using $k$-term expansions $\sum_{j \in I} c_j \phi_j$ where $I$ is an index set of size $k$. Formally, using the matrix notation $\dict c = \sum_j c_j \phi_j$ and denoting $\norm{c}{0} = \sharp \{j, c_j \neq 0\}$ the number of nonzero components in the vector $c$ , we can define the (nonlinear) set of all such $k$-term expansions
\[
\Sigma_k(\dict) := \left\{\dict c,\quad \norm{c}{0} \leq k\right\}.
\]
\subsection{Best $k$-term approximation}
A first question we may want to answer, for each data vector $f$, is: {\em how well can we approximate it using elements of $\Sigma_k(\dict)$}? The error of best $k$-term approximation is a quantitative answer for a fixed $k$:
\[
\sigma_k(f,\dict) := \inf_{y \in \Sigma_k(\dict)} \norm{f-y}{X}.
\]
A more global view is given by the largest approximation rate $s>0$ such that\footnote{
The notation $a \lesssim b$ indicates the existence of a finite constant $C$ such that $a \leq C \cdot b$. The notation $a \asymp b$ means that we have both $a \lesssim b$ and $b \lesssim a$. As usual, $C$ will denote a generic finite constant, independent from the other quantities of interest. Different occurences of this notation in the paper may correspond to different values of the constant.}
\[
\sigma_k(f,\dict) \lesssim k^{-s},\quad \forall k \geq 1.
\]
To measure more finely the rate of approximation, one defines for $0<q<\infty$ \cite[Chapter 7, Section 9]{DeVore:1996aa}
\begin{equation}
\label{eq:DefApproxNorm}
\seminorm{f}{\cA_q^s(\dict)} := \left(\sum_{k \geq 1} \left[k^s \sigma_k(f,\dict)\right]^q k^{-1} \right)^{1/q} 
\asymp \left(\sum_{j \geq 0} \left[2^{js} \sigma_{2^j}(f,\dict)\right]^q\right)^{1/q}.
\end{equation}
and the associated {\em approximation spaces}
\begin{equation}
\label{eq:DefApproxSpace}
\cA_q^s(\dict) := \{f \in \cH, \seminorm{f}{\cA_q^s(\dict)}<\infty\}\quad 
\end{equation}
\subsection{Sparse or compressible representations}
Alternatively, we may be interested in {\em sparse / compressible representations} of $f$ in the dictionary. Suppose the vectors forming $\dict$ are quasi-normalized in $X$: for all $j$, $0< c \leq \norm{\phi_j}{X} \leq C < 
\infty$.
Then using  $\ell^\tau$ (quasi)-norms (in particular, $0 < \tau \leq 1$) one defines\footnote{It has been shown in~\cite{Gribonval:2004ab} that under mild assumptions on the dictionary, such as Eq.~\eqref{eq:DefUpperFrameBound}, the definition~\eqref{eq:DefSparseNorm} is fully equivalent to the more general topological definition of $\norm{f}{\ell^\tau(\dict)}$ introduced in~\cite{DeVore:1996aa}.}
\begin{equation}
\label{eq:DefSparseNorm}
\norm{f}{\ell^\tau(\dict)} := \inf_{c | \dict c = f} \norm{c}{\tau}
\end{equation}
and the associated {\em sparsity spaces} (also called {\em smoothness spaces}, for when $\dict$ is, e.g., a wavelet frame, they indeed characterize smoothness on the Besov scale) 
\[
\ell^\tau(\dict) := \{f, \norm{f}{\ell^\tau(\dict)}<\infty\}. 
\]
\subsection{Direct and inverse estimates}%Approximation {\em vs} sparse representation}
Interestingly, the above defined concepts are related. In a Hilbert space $X = \cH$, when $\dict$ satisfies the upper bound
\begin{equation}
\label{eq:DefUpperFrameBound}
\norm{\dict c}{\cH}^2 \leq \ufbound \cdot \norm{c}{2}^2,\quad \forall c \in \ell^2,
\end{equation}
the sparsity spaces for $0<\tau<2$ are characterized as
\[
\ell^\tau(\dict) = \{f,\ \exists c,\ \norm{c}{\ell^\tau}<\infty, f = \dict c\}= \dict \ell^\tau,
\]
and for any $s>0$ we have the so-called {\em Jackson inequality}
\begin{equation}
\label{eq:DefJineq}
\sigma_k(f,\dict) \leq C_\tau(\ufbound) \cdot \norm{f}{\ell^\tau(\dict)} \cdot k^{-s},\quad s = \frac1\tau-\frac12,\quad \forall f\in \ell^\tau(\dict),\, \forall k\in\bN
\end{equation}
where, as indicated by the notation, the constant $C_\tau(\ufbound)$ only depends on $\tau$ and the upper bound $\ufbound$ in~\eqref{eq:DefUpperFrameBound}. 
Note that the upper bound~\eqref{eq:DefUpperFrameBound} holds true whenever the dictionary is a {\em frame}:  $\ufbound$ is then called the {\em upper frame bound}, and we will use this terminology.

When $\dict$ is an orthogonal basis, a converse result is true: if $\sigma_k(f,\dict)$ decays as $k^{-s}$ then $\norm{f}{\ell^\tau_w(\dict)} < \infty$, where $\ell^\tau_w$ is a weak $\ell^\tau$ space \cite{DeVore:1996aa} and $s = 1/\tau-1/2$. More generally, inverse estimates are related to a {\em Bernstein inequality} \cite{DeVore:1988aa,DeVore:1993aa}.

\begin{equation}
\label{eq:DefBineq}
\norm{f_k}{\ell^\tau(\dict)} \leq C \cdot  k^{r} \cdot \norm{f_k}{\cH},\quad \forall f_k \in \Sigma_k(\dict), \forall k.
\end{equation}
The  inequality \eqref{eq:DefBineq} is related to the so-called Bernstein-Nikolsky inequality, we refer the reader to \cite{Bang:1991aa,DeVore:1993aa} for more information.

\subsection{When approximation spaces are sparsity spaces}
When a Jackson inequality holds together with a Bernstein inequality with matching exponent $r = 1/\tau-1/2$, it is possible to characterize (with equivalent (quasi)-norms) the approximation spaces $\cA_q^s(\dict)$ as real interpolation spaces \cite[Chapter 7]{DeVore:1993aa} between $\cH$, denoted $(\cH,\ell^\tau(\dict))_{\theta,q}$, where $s = \theta r$, $0<\theta<1$.  
%This characterization does not hold if either the Jackson inequality or the Bernstein inequality fails. 
The definition of real interpolation spaces will be recalled in Section~\ref{sec:interpolation}. Let us just mention here that it is based on decay properties of the {\em $K$-functional}
\begin{equation}
\label{eq:DefKfunctional}
K(f,t;\cH,\ell^p(\dict)) 
= \inf_{c\in \ell^p} \left\{\norm{f-\dict c}{\cH} + t \norm{c}{p}\right\}.
\end{equation}
A priori, without a more explicit description of real interpolation spaces, the characterization of approximation spaces as interpolation spaces may seem just a sterile pedantic rewriting. 
Fortunately, we show in Section~\ref{sec:interpolation} ({\em Theorem~\ref{th:Interpolation}}) that the Bernstein inequality~\eqref{eq:DefBineq}, together with the upper frame bound~\eqref{eq:DefUpperFrameBound}, allows to directly identify approximation spaces  with sparsity spaces, with equivalent (quasi)-norms, for certain ranges of parameters. In particular, the following result can be obtained as a consequence of Theorem~\ref{th:Interpolation}.
\begin{theorem}
\label{th:FullSpaceEquality}
Suppose that $\dict$ satisfies the upper frame bound~\eqref{eq:DefUpperFrameBound} with constant $B$ as well as the Bernstein inequality~\eqref{eq:DefBineq} with some $0<\tau \leq 1$, with exponent $r=1/\tau-1/2$ and constant $C$. Then we have
\begin{equation}
\label{eq:FullSpaceEquality}
\cA_\tau^r(\dict) = \ell^\tau(\dict) % = (\cH,\ell^p(\dict))_{\theta,\tau} ,\quad r=1/\tau-1/2 = \theta(1/p-1/2).
\end{equation}
with equivalent norms, i.e. 
\[
c_1(B,C) \cdot \norm{f}{\ell^\tau(\dict)} 
\leq \norm{f}{\cA_\tau^r(\dict)} := \norm{f}{\cH}+\seminorm{f}{\cA_\tau^r(\dict)}
\leq c_2(B,C) \cdot \norm{f}{\ell^\tau(\dict)}
\] 
where the constants only depend on $B$ and $C$.
\end{theorem}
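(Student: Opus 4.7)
The plan is to prove Theorem~\ref{th:FullSpaceEquality} by establishing the two continuous embeddings $\ell^\tau(\dict) \hookrightarrow \cA_\tau^r(\dict)$ and $\cA_\tau^r(\dict) \hookrightarrow \ell^\tau(\dict)$ separately: the first leverages only the upper frame bound, while the second is where the Bernstein inequality enters. This corresponds to the endpoint specialization $s=r$, $q=\tau$ of the general Jackson--Bernstein machinery developed in Theorem~\ref{th:Interpolation}.

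For the Jackson direction, I would start from an arbitrary representation $f = \dict c$ with $c \in \ell^\tau$, and let $c_k$ denote the truncation of $c$ to its $k$ largest entries (in modulus). The upper frame bound~\eqref{eq:DefUpperFrameBound} gives $\sigma_k(f,\dict) \leq \|\dict(c - c_k)\|_{\cH} \leq \sqrt{B}\,\sigma_k(c)_{\ell^2}$, where $\sigma_k(c)_{\ell^2}=(\sum_{j>k}(c_j^*)^2)^{1/2}$. The classical Stechkin-type identity $\|c\|_{\ell^\tau}^\tau \asymp \sum_{k\geq 1} k^{r\tau - 1}\sigma_k(c)_{\ell^2}^\tau$ (valid for $0<\tau<2$, $r=1/\tau - 1/2$, a consequence of Hardy's inequality) then yields $\seminorm{f}{\cA_\tau^r(\dict)} \lesssim \sqrt{B}\,\|c\|_\tau$. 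Combining with the trivial bound $\|f\|_{\cH} \leq \sqrt{B}\|c\|_2 \leq \sqrt{B}\|c\|_\tau$ and taking the infimum over representations yields the first inclusion with a constant depending only on $B$.

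For the Bernstein direction, I would select near-best dyadic approximants $f_{2^j} \in \Sigma_{2^j}(\dict)$ satisfying $\|f - f_{2^j}\|_{\cH}\leq 2\sigma_{2^j}(f,\dict)$, and write the telescoping decomposition $f = f_1 + \sum_{j\geq 0}(f_{2^{j+1}} - f_{2^j})$, which converges in $\cH$ since $f\in\cA_\tau^r(\dict)$ forces $\sigma_{2^j}(f,\dict)\to 0$. Each difference lies in $\Sigma_{3\cdot 2^j}(\dict)$ with $\cH$-norm at most $4\sigma_{2^j}(f,\dict)$, so the Bernstein inequality~\eqref{eq:DefBineq} produces $\|f_{2^{j+1}} - f_{2^j}\|_{\ell^\tau(\dict)} \lesssim 2^{jr}\sigma_{2^j}(f,\dict)$. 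A $\tau$-quasi-triangle inequality for $\|\cdot\|_{\ell^\tau(\dict)}$ then sums the $\tau$-th powers to deliver
\[
\norm{f}{\ell^\tau(\dict)}^\tau
\lesssim \norm{f_1}{\ell^\tau(\dict)}^\tau + \sum_{j\geq 0} 2^{jr\tau}\sigma_{2^j}(f,\dict)^\tau
\lesssim \norm{f}{\cH}^\tau + \seminorm{f}{\cA_\tau^r(\dict)}^\tau,
\]
which is precisely the second embedding, with constants depending on $B$ and $C$.

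The main obstacle I anticipate is justifying that the telescoping series converges not only in $\cH$ but also in $\ell^\tau(\dict)$ \emph{to the same limit} $f$. This requires two auxiliary facts: first, the $\tau$-quasi-triangle inequality $\|g+h\|_{\ell^\tau(\dict)}^\tau \leq \|g\|_{\ell^\tau(\dict)}^\tau + \|h\|_{\ell^\tau(\dict)}^\tau$, which one verifies by concatenating nearly optimal coefficient representations of $g$ and $h$ and using the corresponding inequality in $\ell^\tau$; second, the completeness of $(\ell^\tau(\dict), \|\cdot\|_{\ell^\tau(\dict)})$ together with the continuous embedding $\ell^\tau(\dict)\hookrightarrow \cH$ guaranteed by~\eqref{eq:DefUpperFrameBound}, which ensures that the $\ell^\tau(\dict)$-limit of a Cauchy partial-sum sequence is consistent with its $\cH$-limit. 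Once these two points are settled, the rest of the argument is a routine telescoping computation.
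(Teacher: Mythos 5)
Your proof is correct, and its core coincides with the paper's. The hard embedding $\cA_\tau^r(\dict)\hookrightarrow\ell^\tau(\dict)$ is exactly the argument carried out inside the proof of Theorem~\ref{th:Interpolation}: near-best dyadic approximants, a telescoping decomposition, the Bernstein inequality applied to $f_{2^{j+1}}-f_{2^j}\in\Sigma_{3\cdot 2^j}(\dict)$, and the $\tau$-power triangle inequality; your attention to why the telescoping series converges in $\ell^\tau(\dict)$ to the same limit $f$ (concatenating near-optimal coefficient sequences and using boundedness of $\dict:\ell^\tau\to\cH$ from~\eqref{eq:DefUpperFrameBound}, or equivalently completeness of $\ell^\tau(\dict)$ plus the continuous embedding into $\cH$) is precisely what legitimizes the paper's step ``$f=\sum_j(f_{j+1}-f_j)$''. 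Where you genuinely differ is the easy embedding $\ell^\tau(\dict)\hookrightarrow\cA_\tau^r(\dict)$: the paper simply invokes \cite[Theorem~6]{Gribonval:2004ab}, whereas you reprove it directly by truncating a representation $f=\dict c$ to the $k$ largest coefficients, applying the upper frame bound, and quoting the classical Stechkin/Hardy characterization of $k$-term approximation of sequences in $\ell^2$; this makes the argument self-contained and bypasses the interpolation-space framework of Theorem~\ref{th:Interpolation}, which indeed is not needed for this endpoint statement, at the cost of importing that classical sequence-space fact. One small correction on that step: the two-sided claim $\norm{c}{\tau}^\tau\asymp\sum_{k\geq 1}k^{r\tau-1}\sigma_k(c)_{\ell^2}^\tau$ is false as stated (the right-hand side vanishes for any $1$-sparse $c$); what is true, and all your argument actually uses, is the one-sided Hardy-type bound $\sum_{k\geq 1}k^{r\tau-1}\sigma_k(c)_{\ell^2}^\tau\lesssim\norm{c}{\tau}^\tau$, the equivalence holding only after adding $\norm{c}{2}^\tau$ to the sum.
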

In other words, under the assumptions of Theorem~\ref{th:FullSpaceEquality}, a data vector $f \in \cH$ can be approximated by $k$-term expansions at rate $k^{-r}$ (in the sense $f \in \cA_\tau^r(\dict)$, where $r = 1/\tau-1/2$) if, {\em and only if}, $f$ admits a sparse {\em representation} $f = \sum_j c_j \phi_j$ with $\sum_j |c_j|^\tau < \infty$. 

\subsection{Ideal {\em vs} practical approximation algorithms}

Consider a function $f$ that can be approximated at rate $k^{-r}$ using $k$-term expansions from $\dict$: $\sigma_k(f,\dict) \lesssim k^{-r}, \forall k \geq 1$. Under the assumptions of the above Theorem, we can conclude that the function $f$ indeed admits a {\em representation} $f = \sum_j c_j \phi_j$ with $\sum_j |c_j|^\tau < \infty$. Suppose that we know how to compute such a representation (e.g., that we can solve the optimization problem $\min \|c\|_\tau\ \mbox{subject to}\ f = \dict c$). Then, sorting the coefficients in decreasing order of magnitude $|c_{j_m}| \geq |c_{j_{m+1}}|$, one can build a simple sequence of $k$-term approximants $f_m := \sum_{m=1}^k c_{j_m} \phi_{j_m}$ which converge to $f$ at the rate $r$: $\|f-f_k\|_\cH \lesssim k^{-r}$. Note that one may not however be able to guarantee that $\|f-f_k\|_\cH \leq C \sigma_k(f,\dict)$ for a fixed constant $C<\infty$. 

%A very close property of $\dict$ is the notion of $r$-greedy dictionary~\cite[Definition 1.3]{Teml2001} which would follow from the equality $\ell^\tau_w(\dict) = \cA_\infty^r(\dict)$ with equivalent norms. 
A special case of interest is $\tau=1$, where the optimization problem 
\[
\min \|c\|_1 \ \mbox{subject to}\ f = \dict c
\]
is convex, and the unit ball in $\ell^1(\dict)$ is simply the convex hull of the symmetrized dictionary $\{\pm \phi_j\}_j$ 
with $\phi_j$ the atoms of the dictionary $\dict$. Therefore, under the assumptions of the above Theorem for $\tau = 1$, if a function can be approximated at rate $k^{-1/2}$ then, after proper rescaling, it belongs to the convex hull of the symmetrized dictionary, and there exists constructive algorithms such as Orthonormal Matching Pursuit ~\cite{MZ93,PRK93} which are guaranteed to provide the rate of approximation $k^{-1/2}$ \cite[Theorem 3.7]{DeVore:1996aa}.

\subsection{Null Space Properties and fragility of Bernstein inequalities} 

On the one hand, it is known \cite{Gribonval:2004ab} that Jackson inequalities are always satisfied provided that the dictionary is a {\em frame}, i.e., 
\begin{equation}
\label{eq:DefFrameBounds}
\lfbound \norm{f}{\cH}^2 \leq \norm{\dict^T f}{2}^2 \leq \ufbound \norm{f}{\cH}^2,\quad \forall f \in \cH.
\end{equation}
The upper bound $B$ is actually equivalent to the upper frame bound~\eqref{eq:DefUpperFrameBound} and therefore sufficient for a Jackson inequality to hold. 

On the other hand, Bernstein inequalities are known to be much more subtle and seemingly fragile: they may be satisfied for certain structured dictionaries, but not for arbitrarily small perturbations thereof \cite{Gribonval:2004aa}. 

In Section~\ref{sec:fragile}, for the sake of simplicity we restrict our attention to the case $\tau=1$ when the dictionary $\dict$ forms a frame for a general Hilbert space $\cH$. We show that the Bernstein inequality for $\ell^1(\dict)$,
\begin{equation}\label{eq:Bern}
\|\dict c \|_{\ell^1(\dict)}\leq C m^{1/2} \|\dict c \|_{\cH},\qquad c :\norm{c }{0}\leq m,  \forall m\geq 1,
  \end{equation}
is closely linked to {\em properties of the kernel of $\dict$}  given by
\[
N(\dict):=\{z\in\ell^2:\dict z={0}\}.
\]

The seemingly simple case where we have a one dimensional null space for the dictionary, $N(\dict)=\Span\{z\}$ for some fixed sequence $z$, is particularly useful to demonstrate the fragility of the Bernstein estimates as the following example shows.
\begin{example}\label{example:1}
Given any infinite dictionary $\dict$ with 
$N(\dict)=\Span\{z\}$, where $z=(z_j)_{j=1}^\infty\in\ell^p$, for some $0<p\leq 1$. Then for each $\varepsilon>0$, there is a vector $\tilde{z}$ with $\norm{z-\tilde{z}}{p}<\varepsilon$ such that 
the Bernstein inequality \eqref{eq:Bern}  fails  for any dictionary $\tilde{\dict}$ with 
$N(\tilde{\dict})=\Span\{\tilde{z}\}$.

A specific case is given by $\dict=\cB\cup\{g\}$, with
$\cB$ the Dirac basis for $\ell^2$ and $g\in\ell^p$ for some $0<p<1$.  Then we can find an arbitrarily small perturbation $\tilde{g}$ of $g$ in $\ell^p$ such that 
the Bernstein inequality fails for the "perturbed" dictionary $\tilde{\dict}=\cB\cup\{\tilde{g}\}$.
\end{example}

Notice that in the preceeding example, nothing was asssumed about the 
Bernstein inequality for the dictionary $\dict$ 
itself. Thus, arbitrarily close to any dictionary with a reasonable one dimensional null space, there is a ''bad'' dictionary. 

However, it is possible to find good dictionaries with a one dimensional null space  for which  \eqref{eq:DefBineq} holds. The following example of such a dictionary.  

\begin{example}\label{example:2}
Suppose $\dict$ satisfies
$N(\dict)=\Span\{z\}$, where $z=(z_j)_{j=1}^\infty$  is such that  there is a constant $C<\infty$  satisfying  
\[
\forall k\in\bN: \sum_{j=k}^\infty |z_j|\leq C|z_k|.
\]
Then the Bernstein inequality \eqref{eq:Bern}  holds true. 

An explicit implementation of this example is given by $\dict=\cB\cup\{g\}$, with
$\cB=\{e_k\}_{k\in\bN}$ an orthonormal basis for $\ell^2$ and $g=-\sum_{k=1}^\infty a^k e_k$ for some fixed $0<a<1$.
\end{example}

Examples \ref{example:1} and \ref{example:2} combined show that one can always perturb a nice dictionary 
$\dict$ for which  \eqref{eq:DefBineq} holds ever so slightly as to make   \eqref{eq:DefBineq} collapse.

We justify the two examples in Section \ref{sec:fragile} by performing a careful analysis 
of the Bernstein inequality \eqref{eq:Bern} when $\dict$ is a frame. In Section \ref{sec:general_dict}  we study the general frame dictionary  and derive a sufficient condition  stated in Proposition \ref{prop:A}  for  \eqref{eq:Bern} to hold. Then in Section \ref{sec:1d_null_space} we present a more refined analysis (Proposition \ref{prop:B}) in the special case  where the kernel $N(\dict)$ is one-dimensional. 
The proof of Proposition \ref{prop:B} is based on an application of 
the general results in Section \ref{sec:general_dict}.

\subsection{Incoherence and the Restricted Isometry Property}
The above examples illustrate that the Bernstein inequality (and its nice consequences such as Theorem~\ref{th:FullSpaceEquality}) can be fairly fragile. However, this could be misleading, and we will now show that in a certain sense "most" dictionaries satisfy the inequality in a robust manner.

In a previous work we showed that {\em incoherent} frames \cite{Gribonval:2006aa} satisfy a "robust" Bernstein inequality, although with an exponent $r = 2(1/\tau-1/2)$ instead of the exponent $s = 1/\tau-1/2$ that would match the Jackson inequality. This inequality is then robust, because small enough perturbations of incoherent dictionaries remain incoherent.

In the last decade, a very intense activity related to Compressed Sensing \cite{Donoho:2006aa} has lead to the emergence of the concept of Restricted Isometry Property (RIP) \cite{Candes:2005aa,Candes:2006aa}, which generalizes the notion of coherence. A dictionary $\dict$ is said to satisfy the RIP of order $k$ with constant $\ripc$ if, for any coefficient sequence $c$ satisfying $\norm{c}{0}\leq k$, we have
\begin{equation}
\label{eq:DefRIP}
(1-\ripc) \cdot \norm{c}{2}^2 \leq \norm{\dict c}{\cH}^2 \leq (1+\ripc) \cdot \norm{c}{2}^2.
\end{equation}
The RIP has been widely studied for random dictionaries, and used to relate the minimum $\ell^1$ norm solution $c^\star$ of an inverse linear problem $f=\dict c$ to a "ground truth" solution $c_0$ which is assumed to be ideally sparse (or approximately sparse). 

In this paper, we are a priori not interested in "recovering" a coefficient vector $c_0$ from the observation $f = \dict c_0$. Instead, we wish to understand how the rate of ideal (but NP-hard) $k$-term approximation of $f$ using $\dict$ is related to the existence of a representation with small $\ell^\tau$ norm. 

In Section~\ref{sec:finitedim}, we study finite-dimensional dictionaries, where it turns out that the lower bound in the RIP~\eqref{eq:DefRIP} provides an appropriate tool to obtain Bernstein inequalities with controlled constant\footnote{The control of constants is the crucial part, since in finite dimension all norms are equivalent, which implies that the Bernstein inequality is always trivially satisfied.}. 
Namely, we say that the dictionary $\dict$ satisfies $\LRIP(k,\ripc)$ with a constant $\ripc<1$  provided that 
\begin{equation}\label{eq:RIP21}
 \norm{\dict c}{\cH}^2 \geq (1-\ripc) \cdot
  %\frac{ \norm{c}{1}}{\sqrt{\norm{c}{0}}},
  \norm{c}{2}^2,
\end{equation}
for any sequence $c$ satisfying $\norm{c}{0}\leq k$. 
We prove ({\em Lemma~\ref{le:RIPBineq}}) that in $\cH = \bR^N$ the lower frame bound $\lfbound>0$ and the $\LRIP(\slevel N,\ripc)$, imply a Bernstein inequality for $0<\tau \leq 2$ with exponent $r = 1/\tau-1/2$. As a result we have:
\begin{theorem}
\label{th:RIPimpliesCaract}
Let $\dict$ be an $m\times N$ frame with frame bounds $0<\lfbound \leq \ufbound < \infty$. Assume that $\dict$ satisfies  $\LRIP(\slevel N,\ripc)$, where $\ripc<1$ and $0<\slevel <1$. Then
\begin{itemize}
\item for $0<\tau\leq 2$, the Bernstein inequality~\eqref{eq:DefBineq} holds with exponent $r = 1/\tau-1/2$ and a constant $
C_\tau(\lfbound,\ripc,\slevel) < \infty$ ({\em cf} Eq.~\eqref{eq:ConstantBernstein})
\item for $0<\tau \leq 1$, $0<\theta<1$, we have, with equivalent norms,
\[
\cA_\tau^r(\dict) =  \ell^\tau(\dict) = (\cH,\ell^p(\dict))_{\theta,\tau}, \quad \frac1\tau = \frac{\theta}{p} + \frac{1-\theta}{2}.
\]
\end{itemize}
The constant $C_\tau(\lfbound,\ripc,\slevel)$ and the constants in norm equivalences may depend on $\lfbound, \ufbound, \ripc$, and $\slevel$, {\em but they do not depend on the dimension} $N$. 
\end{theorem}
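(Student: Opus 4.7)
The plan is to reduce the theorem to Lemma~\ref{le:RIPBineq} (the Bernstein inequality from LRIP) and then feed this estimate into the already-established machinery: Theorem~\ref{th:FullSpaceEquality} for the approximation/sparsity identification, and Theorem~\ref{th:Interpolation} for the three-way equivalence with the interpolation space. The entire burden is therefore to produce the Bernstein inequality with a constant depending only on $A$, $\ripc$, and $\slevel$ (and not on $N$), which is the dimension-independence claim.

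To prove the Bernstein inequality, I would fix $f_k = \dict c$ with $\|c\|_0 \leq k$. Because $\|f_k\|_{\ell^\tau(\dict)}$ is defined as an infimum over representations, it suffices to exhibit, for each sparsity level $k$, a representation whose $\ell^\tau$ norm is bounded by $C\,k^{1/\tau-1/2}\|f_k\|_\cH$. The argument naturally splits at the LRIP threshold $\slevel N$. When $k \leq \slevel N$, the $\LRIP(\slevel N, \ripc)$ applied to $c$ itself yields $\|c\|_2 \leq (1-\ripc)^{-1/2}\|f_k\|_\cH$, and the elementary support-counting inequality $\|c\|_\tau \leq k^{1/\tau-1/2}\|c\|_2$, valid for $k$-sparse vectors whenever $0<\tau\leq 2$ (a one-line H\"older application on the support), closes the case. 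When $k > \slevel N$, the sparsity constraint is no longer informative and the lower frame bound must be brought in: invertibility of $\dict\dict^T$ implies that the pseudoinverse representation $c^\star := \dict^T(\dict\dict^T)^{-1} f_k$ satisfies $\|c^\star\|_2 \leq A^{-1/2}\|f_k\|_\cH$, while the full-dimensional embedding $\|c^\star\|_\tau \leq N^{1/\tau-1/2}\|c^\star\|_2$ combined with $N < k/\slevel$ gives a bound of the form $\slevel^{1/2-1/\tau}A^{-1/2}\,k^{1/\tau-1/2}\|f_k\|_\cH$. Taking the maximum of the two constants delivers a $C_\tau(A,\ripc,\slevel)$ that is manifestly independent of $N$.

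Given the Bernstein inequality with matching exponent $r=1/\tau-1/2$, the second bullet follows at once: Theorem~\ref{th:FullSpaceEquality} yields $\cA_\tau^r(\dict) = \ell^\tau(\dict)$ for $0<\tau\leq 1$, and Theorem~\ref{th:Interpolation} identifies both with the real interpolation space $(\cH,\ell^p(\dict))_{\theta,\tau}$ under the parameter relation $1/\tau = \theta/p + (1-\theta)/2$; the norm-equivalence constants depend only on $B$ and $C_\tau$, hence only on $A,B,\ripc,\slevel$. The main obstacle is not any single hard inequality but the bookkeeping required to guarantee dimension-independence at each step: the naive pseudoinverse bound introduces an $N^{1/\tau-1/2}$ factor, and the crucial observation---easy once noticed but essential---is that working with a \emph{relative} sparsity threshold $\slevel N$ rather than an absolute one is precisely what lets this $N$-factor be traded for a $k$-factor via $N < k/\slevel$, keeping every constant clean.
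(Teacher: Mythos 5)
Your Bernstein half is correct and is essentially the paper's own argument (Lemma~\ref{le:RIPBineq}): for $k\leq \slevel N$ you apply the $\LRIP$ to the sparse coefficient vector together with $\norm{c}{\tau}\leq k^{1/\tau-1/2}\norm{c}{2}$, and for $k>\slevel N$ your pseudoinverse coefficients $c^\star=\dict^T(\dict\dict^T)^{-1}f$ are exactly the canonical dual-frame coefficients $\{\langle f,\tilde\phi_j\rangle\}$ used in the paper, with $\norm{c^\star}{2}^2=\langle (\dict\dict^T)^{-1}f,f\rangle\leq \lfbound^{-1}\norm{f}{\cH}^2$; trading the factor $N^{1/\tau-1/2}$ for $\slevel^{1/2-1/\tau}k^{1/\tau-1/2}$ is the same step, and you recover the constant of Eq.~\eqref{eq:ConstantBernstein}. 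The deduction $\cA_\tau^r(\dict)=\ell^\tau(\dict)$ from Theorem~\ref{th:FullSpaceEquality} is also fine.

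There is, however, a genuine gap in the interpolation part of the second bullet. You claim that Theorem~\ref{th:Interpolation} ``identifies both with'' $(\cH,\ell^p(\dict))_{\theta,\tau}$, but that theorem only provides the one-sided embedding $(\cH,\ell^p(\dict))_{\theta,\tau}\hookrightarrow \ell^\tau(\dict)$ (together with $\cA_\tau^r(\dict)\hookrightarrow\ell^\tau(\dict)$ and, when $\beta=1$, the equality $\cA_\tau^r(\dict)=\ell^\tau(\dict)$). The reverse inclusion $\ell^\tau(\dict)=\cA_\tau^r(\dict)\hookrightarrow(\cH,\ell^p(\dict))_{\theta,\tau}$ is nowhere justified in your proposal. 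The paper closes this by invoking the general characterization of approximation spaces as interpolation spaces \cite[Chapter 7]{DeVore:1993aa}: the upper frame bound gives the Jackson inequality for $\ell^p(\dict)$ with exponent $1/p-1/2$, and Lemma~\ref{le:RIPBineq} gives the Bernstein inequality for $\ell^p(\dict)$ with the same exponent --- here it matters that the lemma covers the whole range $0<\tau\leq 2$, hence also the level $p<\tau$ --- so that $\cA_\tau^{\theta(1/p-1/2)}(\dict)=(\cH,\ell^p(\dict))_{\theta,\tau}$, which combined with Theorem~\ref{th:FullSpaceEquality} yields the three-way equality with the stated parameter relation. Alternatively you could prove the missing embedding directly: for $f=\dict c_0$ the upper frame bound gives $K(f,t;\cH,\ell^p(\dict))\lesssim K(c_0,t;\ell^2,\ell^p)$, and classical sequence-space interpolation gives $\ell^\tau\hookrightarrow(\ell^2,\ell^p)_{\theta,\tau}$ for $1/\tau=\theta/p+(1-\theta)/2$. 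Either way an additional argument is required; as written, the second bullet is only half proved.
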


%\subsection{Random dictionaries}
For random Gaussian dictionaries, the typical order of magnitude of $\lfbound,\ripc(\slevel)$ is known and governed by the aspect ratio $\redund := N/m$ of the dictionary, provided that it is sufficiently high dimensional (its number of rows should be above a threshold $m(\redund)$  implicitly defined in Section~\ref{sec:finitedim}). We obtain the following theorem.
\begin{theorem}
\label{th:GaussianFullSpaceEquality}
Let $\dict$ be an $m \times N$ matrix with i.i.d. Gaussian entries $\mathcal{N}(0,1/m)$. Let $\redund:=N/m$ be the redundancy of the dictionary. If $m \geq m(\redund)$ then, except with probability at most $10\redund^2 \cdot \exp(-\gamma(\redund) m)$, we have 
for all $0<\tau\leq 1$ the equality
\begin{equation}
\label{eq:GaussianFullSpaceEquality}
\cA_\tau^r(\dict) = \ell^\tau(\dict) = (\cH,\ell^p(\dict))_{\theta,\tau},\quad r=1/\tau-1/2= \theta(1/p-1/2).
\end{equation}
with equivalent norms. 

The constants driving the equivalence of the norms are universal: they only depend on $\tau$ and the redundancy factor $\redund$ but {\em not} on the individual dimensions $m$ and $N$. Similarly $\gamma(\redund)$ and $m(\redund)$ only depend on $\redund$. 

For $\redund \geq 1.27$ we have $\gamma(\redund) > 7 \cdot 10^{-6}$. For large $\redund$ we have $\gamma(\redund)\approx 0.002$. 
\end{theorem}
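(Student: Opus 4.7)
The strategy is to reduce the statement to Theorem~\ref{th:RIPimpliesCaract}: it suffices to prove that with overwhelming probability the random Gaussian matrix $\dict$ simultaneously satisfies frame bounds $0<A \le B <\infty$ and an $\LRIP(\kappa N,\delta)$ with constants $A$, $B$, $\kappa$, $\delta$ depending only on the aspect ratio $\redund = N/m$. Once these two probabilistic events are established, Theorem~\ref{th:RIPimpliesCaract} is applied pointwise to each realization in the good event, and the equivalences in~\eqref{eq:GaussianFullSpaceEquality} follow with constants that inherit the $\redund$-only dependence.

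For the frame bounds, I would use the classical Davidson--Szarek deviation inequality for extreme singular values of an $m\times N$ matrix with i.i.d.\ $\mathcal{N}(0,1/m)$ entries. For $\redund>1$, the smallest and largest singular values of $\dict$ concentrate around $\sqrt{\redund}-1$ and $\sqrt{\redund}+1$ respectively, with Gaussian tails of the form $\exp(-m t^2/2)$. Choosing $t$ a fixed fraction of $\sqrt{\redund}-1$ (requiring $\redund$ bounded away from $1$, which is where the numerical threshold $\redund\ge 1.27$ enters) yields frame bounds $A(\redund)\le\sigma_{\min}^2(\dict)\le\sigma_{\max}^2(\dict)\le B(\redund)$ except on an event of probability at most $2\exp(-c_1(\redund) m)$.

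For the $\LRIP$, I would combine Gaussian concentration for $\|\dict c\|_\cH^2$ around $\|c\|_2^2$ on a fixed $k$-dimensional coordinate subspace with an $\epsilon$-net argument on the unit sphere of that subspace, followed by a union bound over the $\binom{N}{k}$ supports; this is the Baraniuk--Davenport--DeVore--Wakin route to the RIP for Gaussians. Writing $k=\kappa N=\kappa\redund m$ and using $\binom{N}{k}\le (e/\kappa)^k$, the failure probability is bounded by
\[
C\cdot\exp\bigl(m\,[\kappa \redund\log(e/\kappa)+\kappa\redund\log(12/\epsilon)-\psi(\delta)]\bigr),
\]
where $\psi(\delta)$ is the concentration rate. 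Since $\kappa\log(e/\kappa)\to 0$ as $\kappa\to 0$, one can pick $\kappa=\kappa(\redund,\delta)$ small enough so that the exponent becomes negative and the overall probability is at most $C\exp(-c_2(\redund) m)$ for some $c_2(\redund)>0$, delivering $\LRIP(\kappa N,\delta)$. Taking the intersection of both good events yields the combined probability bound, which after crude bookkeeping one packages into the form $10\redund^2\exp(-\gamma(\redund) m)$, with the ``$10\redund^2$'' absorbing polynomial factors (e.g.\ the $\binom{N}{k}$ prefactors and the few concentration bounds combined by union bound) and $\gamma(\redund):=\min(c_1(\redund),c_2(\redund))$.

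The main obstacle is quantitative rather than structural: producing honest numerical constants so that $\gamma(\redund)>7\cdot 10^{-6}$ at $\redund=1.27$ and $\gamma(\redund)\approx 0.002$ for large $\redund$. This requires optimizing simultaneously (i) the slack $t$ in the Davidson--Szarek bound so that $A(\redund)>0$, (ii) the sparsity fraction $\kappa$ and the RIP constant $\delta$ fed into Theorem~\ref{th:RIPimpliesCaract}, and (iii) the net resolution $\epsilon$, all as functions of $\redund$. The threshold $m(\redund)$ likewise emerges from requiring these tail bounds to be nontrivial, i.e.\ from requiring the logarithmic prefactor $\binom{N}{k}$ to be dominated by the exponential rate. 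The role of the section on finite-dimensional dictionaries is precisely to make this tuning explicit, and once it is done, substitution into Theorem~\ref{th:RIPimpliesCaract} is immediate and yields the claimed equivalences uniformly in the individual dimensions $m$ and $N$.
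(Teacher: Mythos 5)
Your strategy is essentially the paper's own: show that, except on an event of exponentially small probability, the Gaussian matrix simultaneously has frame bounds controlled by $\redund$ and satisfies $\LRIP(\slevel N,\ripc)$ via a union bound over the $\binom{N}{k}$ supports, then apply Theorem~\ref{th:RIPimpliesCaract} realization by realization and package the failure probabilities into $10\redund^2\exp(-\gamma(\redund)m)$ --- precisely the content of Lemma~\ref{le:RIPRandom} and the concluding proof in Section~\ref{sec:finitedim}. The only differences are minor: the paper gets the restricted lower bound directly from the Davidson--Szarek smallest-singular-value deviation applied to each $m\times k$ submatrix (no $\epsilon$-net or concentration-plus-net step is needed), and the quantitative claims you defer ($\gamma(\redund)>7\cdot 10^{-6}$ for $\redund\geq 1.27$, $\gamma(\redund)\approx 0.002$ for large $\redund$, and $m(\redund)=2/t(\redund)$) are exactly what the explicit choice of the sparsity threshold $t(\redund)$ in Appendix~\ref{app:IncreasingFunction} delivers.
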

Indeed, for random Gaussian dictionaries in high-dimension, with high probability, the Bernstein inequality holds for all $0< \tau \leq 2$ with constants driven by the aspect ratio $\redund := N/m$ but otherwise {\em independent of the dimension $N$}.Using the notion of decomposable dictionary \cite[Theorem 3.3]{Gribonval:2006aa}, this finite dimensional result can be easily adapted to build arbitrarily overcomplete dictionaries in infinite dimension that satisfy the equality~\eqref{eq:FullSpaceEquality}.

%\subsection{Discussion}

The result of Theorem~\ref{th:GaussianFullSpaceEquality} should be compared to our earlier result for incoherent frames obtained in   \cite{Gribonval:2006aa}. In  \cite{Gribonval:2006aa} we found an incoherent dictionary  with aspect ratio (approximately) 2 for which the Bernstein inequality \eqref{eq:DefBineq} can be shown to hold only for the  exponent $r=2(1/\tau-1/2)$, i.e., for $r$ twice as large as the Jackson exponent $s= 1/\tau-1/2$. 
Theorem~ \ref{th:GaussianFullSpaceEquality} illustrates that the result in  \cite{Gribonval:2006aa} really corresponds to a ``worst case'' behaviour and there are indeed many dictionaries (according to the Gaussian measure: the overwhelming majority of dictionaries) with a much better behaviour with respect to Bernstein estimates. This holds true even for aspect ratios $\redund$ that can be arbitrarily large.

\subsection{Conclusion and discussion}

The restricted isometry property is a concept that has been essentially motivated by the understanding of sparse regularization for linear inverse problems such as compressed sensing. 
Beyond this traditional use of the concept, we have shown new connections between the RIP and nonlinear approximation.

The main result we obtained is that, from the point of view of nonlinear approximation, a frame which satisfies a nontrivial restricted property $\delta_k < 1$ (i.e., in the regime $k \propto N$) behaves like an orthogonal basis: the optimal rate of $m$-term approximation can be achieved with an approach that does not involve solving a (potentially) NP-hard problem to compute the best $m$-term approximation for each $m$. In such nice dictionaries, near optimal $k$-term approximation can be achieved in two steps, like in an orthonormal basis: 
\begin{itemize}
\item decompose the data vector $f = \sum_j c_j \phi_j$, with coefficients as sparse as possible in the sense of minimum $\ell^\tau$ norm; 
\item keep the $m$ largest coefficients to build the approximant $f_m := \sum_{j \in I_m} c_j \phi_j$.
\end{itemize}
The second main result is that redundant dictionaries with the above property are not the exception, but rather the rule. While it is possible to build nasty overcomplete dictionaries either directly or by arbitrarily small perturbations of some "nice" dictionaries", in a certain sense the vast majority of overcomplete dictionaries are nice.

One should note that several results of this paper are expressed in finite dimension, where all norms are equivalent. The strength of the results is therefore not the mere existence of inequalities between norms, but in the fact that the involved constants do not depend on the dimension. From a numerical perspective, the control of these constants has essentially an impact in (very) large dimension, and it is not clear whether the constants numerically computed for random dictionaries are useful for dimensions less than a few millions.

A few key questions remains open. For a given data vector $f$, it is generally not known in advance to which $\ell^\tau(\dict)$ space $f$ belongs: 
under which conditions is it possible to efficiently compute a sparse decomposition $f = \sum_j c_j \phi_j$ which is guaranteed to be near optimal in the sense that $\|c\|_\tau$ is almost minimum whenever $f \in \ell^\tau(\dict)$? Can $\ell^1$ minimization (which is convex) be used and provide near best performance under certain conditions ? This is left to future work.

\section{Interpolation spaces}\label{sec:interpolation}
We recall the definition of the $K$-functional.
Let $Y$ be a (quasi-)normed space continuously embedded in a Hilbert space $\cH$. For $f\in\cH$, the $K$-functional is defined by

\[
K(f,t) = K(f,t;\cH,Y) 
:= \inf_{g\in Y} \left\{\norm{f-g}{\cH} + t \norm{g}{Y}\right\}
\]
and the norm defining the interpolation spaces $(\cH,Y)_{\theta,q}$, $0<\theta<1$, $0<q<\infty$, is given by:
\[
\norm{f}{(\cH,Y)_{\theta,q}}^q := \int_0^\infty [t^{-\theta} K(f,t)]^q \,\frac{dt}{t} \asymp \sum_{j \geq 0} 2^{j\theta q} K(f,2^{-j})^q.
\]
The interpolation space $(\cH,Y)_{\theta,q}$ is simply the set of $f$ for which the norm is finite. In our case we consider a frame dictionary $\dict$ and $Y=\ell^p(\dict)$, which is continuously embedded in $\cH$ for $0<p\leq 2$. We have the following result.
\begin{theorem}
\label{th:Interpolation}
Suppose $\dict$ is a frame dictionary for a Hilbert space $\cH$. Let $0<\tau\leq 1$ and suppose the Bernstein inequality for $\ell^\tau(\dict)$ holds with exponent $r$: 
\begin{equation*}
\label{eq:RecallBineq}
\norm{f_k}{\ell^\tau(\dict)} \leq C \cdot  k^{r} \cdot \norm{f_k}{\cH},\quad \forall f_k \in \Sigma_k(\dict), \forall k.
\end{equation*}
Define $\beta := r/(1/\tau-1/2)$. Then, for all $0<\theta<1$, we have the embedding
\begin{equation}\label{eq:interpolation1}
(\cH,\ell^p(\dict))_{\theta,\tau} \hookrightarrow \ell^\tau(\dict),\quad \frac1\tau = \frac{\theta/\beta}p + \frac{(1-\theta/\beta)}2;\\
  \end{equation}
Moreover, we have
\[
\cA_\tau^r(\dict) \hookrightarrow \ell^\tau(\dict),
\] 
and if in addition $r=1/\tau-1/2$ (i.e., $\beta=1$), then 
\[
\cA_\tau^r(\dict) = \ell^\tau(\dict)
\]
with equivalent norms. The constants in the norm inequalities depend only on $p$, on the Bernstein constant for $\ell^\tau(\dict)$, and on the upper frame bound for $\dict$.
\end{theorem}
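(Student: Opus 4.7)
My plan is to exploit the classical duality between Jackson and Bernstein inequalities. The Bernstein side is given as a hypothesis; the matching Jackson side I derive from the upper frame bound $B$ alone. For every $0<p\leq 2$, a nonincreasing rearrangement argument applied to any representation $f=\dict c$ yields
\[
\sigma_k(f,\dict)\leq C_p\sqrt{B}\cdot k^{-s_p}\norm{f}{\ell^p(\dict)},\qquad s_p:=1/p-1/2,
\]
which, by decomposing $f=\dict c+(f-\dict c)$ and taking an infimum over $c\in\ell^p$, upgrades to the $K$-functional form $\sigma_k(f,\dict)\lesssim K(f,k^{-s_p};\cH,\ell^p(\dict))$.

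The heart of the proof is the embedding $\cA_\tau^r(\dict)\hookrightarrow\ell^\tau(\dict)$, which I establish by telescoping. For $f\in\cA_\tau^r(\dict)$, pick near-best approximants $f_j\in\Sigma_{2^j}(\dict)$ with $\norm{f-f_j}{\cH}\leq 2\sigma_{2^j}(f,\dict)$ and set $h_j:=f_{j+1}-f_j\in\Sigma_{2^{j+2}}(\dict)$, so $\norm{h_j}{\cH}\leq 4\sigma_{2^j}(f,\dict)$. The Bernstein hypothesis applied to each $h_j$ produces a representation $h_j=\dict d_j$ with $\norm{d_j}{\tau}\lesssim 2^{jr}\sigma_{2^j}(f,\dict)$. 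Since $\sigma_{2^j}(f,\dict)\to 0$, the partial sums converge to $f$ in $\cH$; assembling $c$ from these $d_j$ together with the initial one-term piece $f_0$ (itself controlled by Bernstein on $\Sigma_1(\dict)$), the $\tau$-subadditivity of $\norm{\cdot}{\tau}^\tau$, valid since $\tau\leq 1$, gives
\[
\norm{f}{\ell^\tau(\dict)}^\tau\leq\norm{c}{\tau}^\tau\lesssim\norm{f}{\cH}^\tau+\sum_j\bigl[2^{jr}\sigma_{2^j}(f,\dict)\bigr]^\tau\asymp\norm{f}{\cA_\tau^r(\dict)}^\tau.
\]

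For the interpolation embedding \eqref{eq:interpolation1}, I substitute the $K$-functional form of Jackson into the approximation seminorm. The parameter relation $1/\tau=(\theta/\beta)/p+(1-\theta/\beta)/2$, combined with $\beta=r/(1/\tau-1/2)$, simplifies to $r=\theta s_p$, so the dyadic change of variable $t_n=2^{-ns_p}$ yields
\[
\seminorm{f}{\cA_\tau^r(\dict)}^\tau=\sum_n[2^{nr}\sigma_{2^n}(f,\dict)]^\tau\lesssim\sum_n[t_n^{-\theta}K(f,t_n;\cH,\ell^p(\dict))]^\tau\asymp\norm{f}{(\cH,\ell^p(\dict))_{\theta,\tau}}^\tau.
\]
Chaining with the previous embedding yields $(\cH,\ell^p(\dict))_{\theta,\tau}\hookrightarrow\cA_\tau^r(\dict)\hookrightarrow\ell^\tau(\dict)$. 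For the converse inclusion in the matched case $\beta=1$, a direct Stechkin-style argument combines $\sigma_k(f,\dict)^2\leq B\sum_{m>k}|c^\ast_m|^2$ (with $c^\ast$ the nonincreasing rearrangement of an $\ell^\tau$-optimal representation) with a dyadic block decomposition based on the bound $|c^\ast_{2^l}|^\tau\leq 2^{-l}\norm{c}{\tau}^\tau$ to give $\seminorm{f}{\cA_\tau^r(\dict)}\lesssim\norm{f}{\ell^\tau(\dict)}$.

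The main obstacle lies in the telescoping step: extracting from the Bernstein inequality an explicit sparse $\ell^\tau$ representation of each block $h_j$ and gluing them into a single coefficient sequence for $f$ with controlled $\norm{\cdot}{\tau}$ norm. This relies essentially on the subadditivity of $\norm{\cdot}{\tau}^\tau$ available only for $\tau\leq 1$, which is also why the theorem is restricted to this range.
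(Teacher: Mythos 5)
Your proof is correct; the decisive step, the embedding $\cA_\tau^r(\dict)\hookrightarrow\ell^\tau(\dict)$, is handled exactly as in the paper: telescope through near-best $2^j$-term approximants, apply Bernstein to each dyadic block $h_j=f_{j+1}-f_j\in\Sigma_{O(2^j)}(\dict)$ to obtain representations $d_j$ with $\norm{d_j}{\tau}\lesssim 2^{jr}\sigma_{2^j}(f,\dict)$, and sum using the $\tau$-power subadditivity of $\norm{\cdot}{\tau}^\tau$. The dependence of the constants on the Bernstein constant, $p$, and the upper frame bound comes out the same.

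Where you genuinely diverge from the paper is in the other two claims. For the interpolation embedding~\eqref{eq:interpolation1}, the paper applies the DeVore--Popov device directly: take a near-minimizer $c_j\in\ell^p$ of $K(f,2^{-j};\cH,\ell^p(\dict))$, truncate it to its $m_j\asymp 2^{j/s}$ largest coordinates ($s=r/\theta=1/p-1/2$), and run the telescoping-plus-Bernstein argument once on the resulting $f_j=\dict\tilde c_j$, landing directly in $\ell^\tau(\dict)$. You instead first upgrade the $\ell^p(\dict)$ Jackson inequality to the $K$-functional form $\sigma_k(f,\dict)\lesssim K(f,k^{-s_p})$, read off $(\cH,\ell^p(\dict))_{\theta,\tau}\hookrightarrow\cA_\tau^r(\dict)$ from a dyadic change of variable $t_n=2^{-ns_p}$ (valid because $K(f,\cdot)$ is quasi-concave, so any geometric grid gives an equivalent norm), and then compose with the embedding into $\ell^\tau(\dict)$. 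This factorization through $\cA_\tau^r(\dict)$ is a bit more modular but requires passing through two inequalities rather than one; both are correct, with the same parameter bookkeeping since $s_p=r/\theta$ forces $1/\tau=(\theta/\beta)/p+(1-\theta/\beta)/2$. For the matched case $\beta=1$ the paper simply invokes~\cite[Theorem~6]{Gribonval:2004ab} for the Jackson-side embedding $\ell^\tau(\dict)\hookrightarrow\cA_\tau^s(\dict)$, whereas you resupply it from scratch via the Stechkin-type rearrangement bound $\sigma_k(f,\dict)^2\leq B\sum_{m>k}|c^\ast_m|^2$ combined with $|c^\ast_{2^l}|^\tau\leq 2^{-l}\norm{c}{\tau}^\tau$; that is a standard and valid self-contained replacement. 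All conclusions and the claimed dependence of constants match.
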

\begin{proof}
We use the general technique proposed by DeVore and Popov \cite{DeVore:1988aa}, and adapt it to the special structure of the considered function spaces. 
One can check that with $Y = \ell^p(\dict)$ we have
\[
K(f,t) = \inf_c \left\{\norm{f-\dict c}{\cH} + t \norm{c}{p}\right\}.
\]
For each $j$ we consider $c_j$ an (almost) minimizer of the right hand side above for $t=2^{-j}$. Fix $0<\theta<1$ and define $s:=r/\theta$ and $p<2$ such that $s = 1/p-1/2$, and set $m_j = \lfloor 2^{j/s} \rfloor \asymp 2^{j/s}$. Define $\tilde{c}_j$ to match $c_j$ on its $m_j$ largest coordinates, and be zero anywhere else. Finally, define $f_0:=0$, $f_j := \dict \tilde{c}_j$, $j\in\bN$ . We can observe that
\begin{eqnarray*}
\norm{f-f_j}{\cH} 
& \leq &
\norm{f-\dict c_j}{\cH} + \norm{\dict (c_j-\tilde{c}_j)}{\cH}\\
& \stackrel{(a)}{\lesssim} &
\norm{f-\dict c_j}{\cH} + \norm{c_j-\tilde{c}_j}{2}\\
& \lesssim &
\norm{f-\dict c_j}{\cH} + \norm{c_j}{p} \cdot m_j^{-s}
\lesssim 
\norm{f-\dict c_j}{\cH} + \norm{c_j}{p} \cdot 2^{-j}\\
& \lesssim & K(f,2^{-j}),
\end{eqnarray*}
where in (a) we used the upper frame bound $\ufbound$ of $\dict$. Accordingly we get
\[
\norm{f_{j+1}-f_j}{\cH} \leq \norm{f-f_j}{\cH} + \norm{f-f_{j+1}}{\cH}
\leq C \cdot K(f,2^{-j})
\]
where the constant only depends on $p$ and the upper frame bound $\ufbound$ of $\dict$.
Since $\tau \leq 1$, we have the quasi-triangle inequality 
\[
\norm{u+v}{\ell^\tau(\dict)}^\tau \leq \norm{u}{\ell^\tau(\dict)}^\tau +\norm{v}{\ell^\tau(\dict)}^\tau.
\]
Since $f = \lim_{j \to \infty} f_j = \sum_{j=0}^\infty (f_{j+1}-f_j)$ we obtain
\begin{eqnarray*}
\norm{f}{\ell^\tau(\dict)}^\tau 
& \leq &
\sum_{j \geq 0} \norm{f_{j+1}-f_j}{\ell^\tau(\dict)}^\tau \\
& \stackrel{(b)}{\lesssim}&
\sum_{j \geq 0} \left[(2^{j/s})^r \norm{f_{j+1}-f_j}{\cH}\right]^\tau\\
& \lesssim&
\sum_{j \geq 0} \left[2^{j\theta} K(f,2^{-j}) \right]^\tau 
\asymp
\norm{f}{(\cH,\ell^p(\dict))_{\theta,\tau}}^\tau.
\end{eqnarray*}
In (b) we used the fact that $f_{j+1}-f_j \in \Sigma_m(\dict)$ with $m = m_j+m_{j+1} \lesssim 2^{j/s}$, 
%and that by Lemma~\ref{le:RIPBineq} the Bernstein %inequality for $\ell^p(\dict)$ holds.
and the assumption that the Bernstein inequality with exponent $r$ holds for $\ell^\tau(\dict)$.
To summarize we obtain $(\cH,\ell^p(\dict))_{\theta,\tau} \subset \ell^\tau(\dict)$, together with the norm inequality\[
\norm{f}{\ell^\tau(\dict)} \leq C \cdot \norm{f}{(\cH,\ell^p(\dict))_{\theta,\tau}}
\]
where the constant only depends on the Bernstein constant for $\ell^\tau(\dict)$, on $p$, and on the upper frame bound $\ufbound$ for $\dict$. 
We have
$1/\tau - 1/2 = r/\beta = (r/\beta s) s  = (\theta/\beta) (1/p-1/2)$, i.e., 
$1/\tau = (\theta/\beta)/p + (1-\theta/\beta)/2$.

Similarly, we can define $f_0=0$ and $f_j$ a (near)best $m_j$-term approximation to $f$ with $m_j = 2^{j-1}$, $j \geq 1$ and obtain $\|f_{j+1}-f_j\|_\cH \leq 2 \sigma_{2^{j-1}}(f,\dict), j \geq 1$. Using the Bernstein inequality and derivations essentially identical to the previous lines we get
\[
\|f\|_{\ell^\tau(\dict)}^\tau 
\lesssim \|f_1\|_\cH^\tau + \sum_{j\geq 1} \left[2^{(j-1)r} \sigma_{2^{j-1}}(f,\dict) \right]^\tau \lesssim \|f\|_{\cA^r_\tau(\dict)}^\tau.
\]
The constant only depends on the Bernstein constant for $\ell^\tau(\dict)$.

Using~\cite[Theorem~6]{Gribonval:2004ab}, the upper frame bound implies 
the continuous embedding $\ell^\tau(\dict) \hookrightarrow \cA_\tau^s(\dict)$ with $s = 1/\tau-1/2$. Hence, when the Berstein exponent is $r=1/\tau-1/2=s$  we have equality that is to say $\cA_\tau^r(\dict) = \ell^\tau(\dict)$ with equivalent norms.
\end{proof}

\begin{remark}
A consequence of Theorem~\ref{th:Interpolation} is a partial answer to an open question raised in~\cite{Gribonval:2006aa}, where "blockwise incoherent dictionaries" are considered and a Bernstein inequality is proved, with exponent $r = \beta(1/\tau-1/2)$, $\beta=2$, for all $0<\tau \leq 2$, yielding the two-sided embedding \cite[Theorem 3.2]{Gribonval:2006aa}:
\[
\ell^\tau(\dict) \hookrightarrow \cA_q^s(\dict) \hookrightarrow (\cH,\ell^\tau(\dict))_{1/2,q},\quad 0 < \tau \leq 2,\quad \tau \leq q < \infty,\quad s=1/\tau-1/2.
\] 
By Theorem~\ref{th:Interpolation}, for $0<q \leq 1$, the Bernstein inequality with exponent $r = \beta(1/q-1/2)$ further implies the embedding $(\cH,\ell^\tau(\dict))_{1/2,q} \hookrightarrow \ell^q(\dict)$ where $1/q = 1/(4\tau) + 3/8$, i.e., $q = 8\tau/(3\tau+2)$. As a result we have
\[
\ell^\tau(\dict) \hookrightarrow \cA_q^s(\dict) \hookrightarrow \ell^q(\dict),
\quad 0<\tau \leq 2/5, \quad q = 8\tau/(3\tau+2), \quad s = 1/\tau-1/2.
\]
We know from~\cite{Gribonval:2006aa} an example of blockwise incoherent dictionary where the exponent of the Berstein inequality cannot be improved, hence the above embedding is also sharp for this class of dictionaries.

\end{remark}
\section{Bernstein estimates for frame dictionaries}
\label{sec:fragile}

In this section we are interested in the Bernstein inequality \eqref{eq:Bern}
in the general case where the dictionary $\dict$ forms a frame for a Hilbert space $\cH$. 
The dimension of $\cH$ may be finite or infinite. 
We will show that the Bernstein inequality is closely linked to properties of the kernel of $\dict$  given by
$$N=N(\dict):=\{z\in\ell^2:\dict z={0}\}.$$
In fact, the frame property ensures that $\norm{\dict c }{\cH}\asymp \inf_{z\in N}\norm{c +z}{2}$ for any sequence $c \in \ell^2$. 
Hence, the Bernstein inequality \eqref{eq:Bern}  holds if and only if the quantity  
 \begin{equation}\label{eq:Cdict}
C(\dict):=
\sup_{m\in\bN}\sup_{c :\norm{c }{0}\leq m}\sup_{z\in N}\inf_{v\in N}m^{-1/2} \cdot \frac{\norm{c +v}{1}}{\norm{c +z}{2}} 
  \end{equation}
is finite. 

We split our analysis in two parts. In Section \ref{sec:general_dict} we 
derive an upper bound on $C(\dict)$ that results in 
 a sufficient condition for \eqref{eq:Bern} to 
hold for a general frame dictionary (Proposition \ref{prop:A}). 
In Section \ref{sec:1d_null_space} we specialize to the case where the kernel $N(\dict)$ is one-dimensional.

The analysis in Section \ref{sec:1d_null_space} is  used to  justify
Examples \ref{example:1} and \ref{example:2}.

\subsection{Bernstein constant for general dictionaries}\label{sec:general_dict}
Here we derive an upper estimate of the quantity  $C(\dict)$, given by \eqref{eq:Cdict}, for general frame dictionaries in a Hilbert space. This estimate leads to the following sufficient condition for a Bernstein inequality for such  dictionaries.
\begin{proposition}
\label{prop:A}
Suppose the dictionary $\dict$ forms a frame for the Hilbert space $\cH$, and $\dict$ has  kernel $N:=N(\dict)$. Then 
the Bernstein inequality \eqref{eq:Bern} holds provided that
\begin{equation}\label{eq:B_upper}
  \sup_{z\in N}\sup_{m\in\bN}\sup_{I:|I|\leq m} m^{-1/2} \cdot \frac{\norm{z_{I^c}}{1}}{\norm{z_{I^c}}{2}}<\infty.
  \end{equation}
Moreover, in the  case where $N\cap \ell^1=\{{0}\}$, the Bernstein inequality \eqref{eq:Bern}  holds if and only if
\begin{equation}\label{eq:C1}
  C_1(\dict):=\sup_{z\in N}\sup_{m\in \bN}\frac{\norm{z_m}{1}}{m^{1/2}\sigma_m(z)_2}<\infty,
  \end{equation}
  where $z_m$ is the vector containing the $m$ largest entries in $z$.   
%\end{itemize}
\end{proposition}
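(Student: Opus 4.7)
\emph{Proof proposal.} The plan is to leverage, for any $c\in\ell^2$, the two identities
\[
\|\dict c\|_\cH \asymp \inf_{z\in N}\|c+z\|_2, \qquad \|\dict c\|_{\ell^1(\dict)}=\inf_{v\in N}\|c+v\|_1,
\]
which come respectively from the frame property and the definition of $\ell^1(\dict)$, and together show that the Bernstein inequality \eqref{eq:Bern} is precisely the finiteness of $C(\dict)$ in \eqref{eq:Cdict}. For the sufficiency of \eqref{eq:B_upper}, I would simply test the infimum in $C(\dict)$ with the admissible choice $v=z$, split $c+z$ on the support $I:=\supp(c)$ and its complement, use Cauchy--Schwarz on $(c+z)_I$ to produce a factor $m^{1/2}$, and invoke \eqref{eq:B_upper} on the tail $z_{I^c}=(c+z)_{I^c}$; a final use of $a+b\le\sqrt{2}\sqrt{a^2+b^2}$ yields the desired bound $\|c+z\|_1\lesssim m^{1/2}\|c+z\|_2$, uniformly in $m,c,z$.

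For the \emph{moreover} part, the key preliminary observation is that under $N\cap\ell^1=\{0\}$ the only $v\in N$ for which $c+v\in\ell^1$ when $c$ is sparse is $v=0$ (since $v=(c+v)-c\in\ell^1$ would force $v\in N\cap\ell^1=\{0\}$); consequently $\inf_{v\in N}\|c+v\|_1=\|c\|_1$ in all our estimates. The easy implication \eqref{eq:Bern}$\Rightarrow C_1(\dict)<\infty$ then follows by testing Bernstein on the sparse vectors $c=z_m$ for $z\in N$: using $\dict z=0$ one has $\dict z_m=-\dict z_{I^c}$ with $I$ the indices of the top $m$ entries, so the upper frame bound gives $\|\dict z_m\|_\cH\le \sqrt{\ufbound}\,\sigma_m(z)_2$, and \eqref{eq:Bern} specialises to $\|z_m\|_1\lesssim m^{1/2}\sigma_m(z)_2$.

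The converse is the harder direction. Given sparse $c$ with support $J$, $|J|\le m$, I would introduce the orthogonal projection $w^*$ of $c$ onto $N$, so that $\|c-w^*\|_2$ equals the distance from $c$ to $N$ and is equivalent to $\|\dict c\|_\cH$. Writing
\[
\|c\|_1 \le \|c_J-w^*_J\|_1 + \|w^*_J\|_1 \le m^{1/2}\|c-w^*\|_2 + \|w^*_J\|_1
\]
via Cauchy--Schwarz on $J$, the crux is to control $\|w^*_J\|_1$ using a hypothesis formulated in terms of the top-$m$ entries of $w^*$ rather than its restriction to the (a priori unrelated) set $J$. Here I would use that $|J|\le m$ yields $\|w^*_J\|_1\le \|w^*_m\|_1$, and that top-$m$ truncation minimises the remaining $\ell^2$ norm among all $m$-sparse truncations, so $\sigma_m(w^*)_2\le\|w^*_{J^c}\|_2=\|(c-w^*)_{J^c}\|_2\le\|c-w^*\|_2$ (the equality because $c$ is supported on $J$). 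Combining with $C_1(\dict)<\infty$ gives $\|w^*_J\|_1\le C_1 m^{1/2}\|c-w^*\|_2$, and hence $\|c\|_1\lesssim m^{1/2}\|\dict c\|_\cH$. The main obstacle is precisely this reconciliation of two different ``supports'', which succeeds only thanks to the extremality of top-$m$ truncation in both the $\ell^1$ and the $\ell^2$ directions.
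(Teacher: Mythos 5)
Your proof is correct. The sufficiency half of your argument (testing the infimum with $v=z$, splitting on $I$ and $I^c$, Cauchy--Schwarz on the $I$-block, invoking \eqref{eq:B_upper} on the tail, and $a+b\le\sqrt{2}\sqrt{a^2+b^2}$) is essentially identical to the paper's derivation of the bound $C(\dict)\le\sqrt{2}\max\{1,\sup_{z,m}\gamma_m^z/m^{1/2}\}$.

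For the \emph{moreover} part you take a genuinely different route. The paper begins, like you, by observing that $N\cap\ell^1=\{0\}$ forces $v=0$ in the infimum, but then fixes the support $I$ and $z\in N$ and asserts that the supremum over $c$ supported on $I$ of $\|c\|_1/\sqrt{\|c+z_I\|_2^2+\|z_{I^c}\|_2^2}$ is attained at the one-parameter family $c=-[z_I+\lambda\,\mathrm{sign}(z)\mathbf{1}_I]$; the resulting scalar optimization is solved in closed form, giving the sharp two-sided bound $C_1(\dict)\le C(\dict)\le C_1(\dict)+1$. You instead handle the two directions separately: the necessity via testing Bernstein on $c=z_m$ and the upper frame bound, and the sufficiency via the orthogonal projection $w^*$ of $c$ onto the (closed) kernel $N$, exploiting the twin extremality of top-$m$ truncation ($\|w^*_J\|_1\le\|w^*_m\|_1$ and $\sigma_m(w^*)_2\le\|w^*_{J^c}\|_2$). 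What each approach buys: the paper's explicit optimization yields the tight quantitative equivalence between $C(\dict)$ and $C_1(\dict)$ (the constant $+1$ and no frame-bound losses), but it rests on an assertion about the shape of the maximiser that the paper labels ``standard'' without proof; your projection argument is softer, more modular, and avoids that assertion entirely, at the cost of picking up frame-bound factors $\sqrt{\ufbound}$ and $\lfbound^{-1/2}$ in the equivalence, which is harmless since the statement only asks for finiteness.
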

\begin{proof}
We prove the sufficient condition for \eqref{eq:Bern} by 
deriving an upper bound for
$C(\dict)$ given by \eqref{eq:Cdict}. For any $m\in\bN$, 
\begin{align}
\sup_{c :\norm{c }{0}\leq m}\sup_{z\in N}\inf_{v\in N}\frac{\norm{c +v}{1}}{m^{1/2}\norm{c +z}{2}}&=
\sup_{I:|I|\leq m}
\sup_{c :\supp(c )\subseteq I}
\sup_{z\in N}\inf_{v\in N}\frac{\norm{c +v}{1}}{m^{1/2}\norm{c +z}{2}}\nonumber\\
&=
\sup_{I:|I|\leq m}
\sup_{c :\supp(c )\subseteq I}
\sup_{z\in N}\inf_{v\in N}\frac{\norm{c +v_I}{1}+\norm{v_{I^c}}{1}}{m^{1/2}\sqrt{\norm{c +z_I}{2}^2+\norm{z_{I^c}}{2}^2}}\nonumber\\
&\leq
\sup_{I:|I|\leq m}
\sup_{c :\supp(c )\subseteq I}
\sup_{z\in N}\frac{\norm{c +z_I}{1}+\norm{z_{I^c}}{1}}{m^{1/2}\sqrt{\norm{c +z_I}{2}^2+\norm{z_{I^c}}{2}^2}}.\label{eq:Cest}
\end{align}
For a given support $I$ and $z\in N$, we introduce $\gamma_I^z:=\norm{z_{I^c}}{1}/\norm{z_{I^c}}{2}$. Notice that for $|I|\leq m$ and $c $ with $\supp(c )\subseteq I$, we have
\begin{align}
\norm{c +z_I}{1}+\norm{z_{I^c}}{1}&\leq 
m^{1/2}\norm{c +z_I}{2}
+\gamma_I^z\norm{z_{I^c}}{2}\nonumber\\
&\leq
\max\{m^{1/2},\gamma_I^z\}\big(\norm{c +z_I}{2}+\norm{z_{I^c}}{2}\big)\nonumber\\
&\leq \sqrt{2}\max\{m^{1/2},\gamma_I^z\}\sqrt{\norm{c +z_I}{2}^2+\norm{z_{I^c}}{2}^2}.\label{eq:gamma}
\end{align}
We let $\gamma_m^z:=\sup_{I:|I|\leq m}\gamma_I^z$. Hence, from \eqref{eq:Cest} we deduce that 
$$C(\dict)\leq \sqrt{2}\max\bigg\{1,\sup_{z\in N}\sup_{m\in\bN}\frac{\gamma_m^z}{m^{1/2}}\bigg\},$$
which shows that condition \eqref{eq:B_upper} implies  $C(\dict)<\infty$.

Let us now consider the case $N\cap \ell^1=\{{0}\}$. Notice that the infimum over $v\in N$ in \eqref{eq:Cdict} is attained for $v=0$.
Hence, $C(\dict)=\sup_{z\in N} B_z$, with  
\begin{equation}\label{eq:B0}
    B_z:=\sup_{m\in\bN}\sup_{I:|I|\leq m}
\sup_{c :\supp(c )\subseteq I}\frac{\norm{c }{1}}{m^{1/2}\sqrt{\norm{c + z_I}{2}^2+\norm{z_{I^c}}{2}^2}}.
\end{equation}
For a fixed support $I$,  standard  estimates show that
$$\frac{\norm{c }{1}}{m^{1/2}\sqrt{\norm{c + z_I}{2}^2+\norm{z_{I^c}}{2}^2}}$$
is maximal for choices of the type
$c =-[z_I+\lambda\text{sign}(z)\mathbf{1}_I$], $\lambda>0$. This choice of $c $ leads to the corresponding (squared) optimization problem
\begin{equation*}\label{eq:opti}
\sup_{\lambda\in \bR} \frac{(\lambda m+\norm{z_I}{1})^2}{m(\lambda^2m^2+\norm{z_{I^c}}{2}^2\big)}=\frac1m\bigg(\frac{\norm{z_I}{1}^2}{\norm{z_{I^c}}{2}^2}+1\bigg).
    \end{equation*}
 Notice that
 \[
 \sup_{I:|I|\leq m}\frac{\norm{z_I}{1}}{\norm{z_{I^c}}{2}}=\frac{\norm{z_m}{1}}{\sigma_m(z)_2},
 \]
 so we deduce that
  \begin{equation}\label{eq:C1_B0}
   C_1(\dict)\leq C(\dict)\leq C_1(\dict)+1,  
  \end{equation}
  which completes the proof.
\end{proof}

\subsection{Dictionaries with one dimensional null-spaces}\label{sec:1d_null_space}
We now turn to the simplified case where the dictionary $\dict$ has a one-dimensional null-space.
In this case, we derive necessary conditions for the Bernstein inequality \eqref{eq:Bern}  to hold that is valid even when $N(\dict)\subset \ell^1$, a case not covered by the  necessary condition of Proposition \ref{prop:A}.
We prove the following:  

\begin{proposition}\label{prop:B}
Suppose the dictionary $\dict$ is a frame for the Hilbert space $\cH$ and has a one-dimensional null-space, 
 $N(\dict)=\Span\{z\}$. Also suppose 
  the Bernstein inequality \eqref{eq:Bern}  holds. Then 
  \begin{equation}\label{eq:Bl}
C_2(z):=
\sup_{m\in\bN}\sup_{I:|I|\leq m}\min\bigg(
\frac{\norm{z_I}{1}}{m^{1/2}\norm{z_{I^c}}{2}},\frac{\norm{z_{I^c}}{1}}{m^{1/2}\norm{z_{I^c}}{2}}\bigg)<\infty.
  \end{equation}
Moreover, if $z\in\ell^p$ for some $0<p<1$, and the Bernstein inequality \eqref{eq:Bern}  holds for $\dict$, then
\[\sup_{m\in\bN} \frac{\sigma_m(z)_1}{m^{1/2}\sigma_m(z)_2}<\infty. \]
\end{proposition}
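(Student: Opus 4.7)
The plan is to leverage the reformulation from Section~\ref{sec:general_dict}: via the frame property $\|\dict c\|_\cH\asymp\inf_{v\in N}\|c+v\|_2$, the Bernstein inequality~\eqref{eq:Bern} is equivalent to $C(\dict)<\infty$, where $C(\dict)$ is the quantity defined in~\eqref{eq:Cdict}. Since $N=\Span\{z\}$ is one-dimensional, both the outer supremum and the inner infimum over $N$ in~\eqref{eq:Cdict} become one-dimensional optimizations over scalars $\lambda,\mu\in\bR$, which is precisely what makes a \emph{necessary} condition tractable in this setting, in contrast to the merely \emph{sufficient} bound of Proposition~\ref{prop:A}.

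For the first assertion, I produce the lower bound on $C(\dict)$ by testing against a canonical cancelling sequence. Fix $m\in\bN$ and an index set $I$ with $|I|\leq m$. In~\eqref{eq:Cdict}, choose $c:=-z_I$ (admissible since $\|c\|_0\leq m$) and take $\lambda=1$ in the outer supremum. The denominator collapses to $\|-z_I+z\|_2=\|z_{I^c}\|_2$, while the numerator reduces to $\inf_{\mu\in\bR}\bigl(|1-\mu|\,\|z_I\|_1+|\mu|\,\|z_{I^c}\|_1\bigr)$. This last expression is piecewise affine in $\mu$, so its infimum is attained at one of the two breakpoints $\mu\in\{0,1\}$ and equals $\min(\|z_I\|_1,\|z_{I^c}\|_1)$. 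Taking the supremum over $m$ and $I$ then yields $C_2(z)\leq C(\dict)<\infty$.

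For the second assertion, I specialize the test set $I$ to the positions $I_m^\star$ of the $m$ largest entries of $|z|$; this identifies $\|z_{I^c}\|_1=\sigma_m(z)_1$, $\|z_{I^c}\|_2=\sigma_m(z)_2$, and $\|z_I\|_1=\|z_m\|_1$. The first part therefore gives $\min(\|z_m\|_1,\sigma_m(z)_1)\leq C_2(z)\,m^{1/2}\sigma_m(z)_2$. The hypothesis $z\in\ell^p$ with $p<1$ forces $z\in\ell^1$, so $\|z\|_1<\infty$, $\sigma_m(z)_1\to 0$, while $\|z_m\|_1\to\|z\|_1>0$. Consequently the minimum on the left is realized by $\sigma_m(z)_1$ for every sufficiently large $m$, which gives the stated uniform bound; the finitely many remaining small values of $m$ (or a degenerate finitely supported $z$, where $\sigma_m(z)_2$ may vanish) contribute only a finite quantity and are absorbed into the constant.

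The calculations are essentially routine once the correct test vector $c=-z_I$ has been identified; choosing $c$ so as to cancel $z$ on $I$ is the only genuinely creative step, since it isolates the tail $z_{I^c}$ in both numerator and denominator and reduces the whole problem to a one-parameter minimization. The remaining technical step, evaluating $\inf_\mu(|1-\mu|a+|\mu|b)$, is elementary because the $\ell^1$ norm is piecewise linear along any line; the passage from the first to the second assertion is then a purely tail-asymptotic argument using $z\in\ell^1$, and I do not foresee any real obstacle in either step.
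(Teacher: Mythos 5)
Your proof is correct and follows essentially the same route as the paper: reformulate the Bernstein inequality as finiteness of $C(\dict)$ from~\eqref{eq:Cdict}, test against $c=-z_I$ to isolate the tail $z_{I^c}$ in the denominator, compute $\inf_\mu(|1-\mu|\|z_I\|_1+|\mu|\|z_{I^c}\|_1)=\min(\|z_I\|_1,\|z_{I^c}\|_1)$, and then specialize $I$ to the $m$ largest entries. The only minor variation is in the second assertion: where the paper uses $z\in\ell^p$, $p<1$, to get $m^{1/2}\sigma_m(z)_2\to 0$ and hence that the first branch of the min blows up, you argue directly that $\sigma_m(z)_1\to 0$ while $\|z_m\|_1\to\|z\|_1>0$ so the min is eventually the second branch — arguably slightly more elementary (it only uses $z\in\ell^1$), but the same conclusion by the same mechanism.
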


\begin{proof}[Proof of Proposition \ref{prop:B}]
In this setting, the Bernstein inequality \eqref{eq:Bern}
 holds if and only if the quantity  
\[
C(\dict):=\sup_{m\in\bN}\sup_{c :\norm{c }{0}\leq m}\sup_{\mu\in\bR}\inf_{\lambda\in\bR}\frac{\norm{c +\lambda z}{1}}{m^{1/2}\norm{c +\mu z}{2}}
\]
is finite. By rescaling, we have 
\begin{align}
C(\dict)
&=\sup_{m\in\bN}\sup_{c :\norm{c }{0}\leq m}\sup_{\mu\in\bR}\inf_{\lambda\in\bR} \frac{\norm{\frac{\lambda}{\mu}z+\frac{1}{\mu}c }{1}}{m^{1/2}\norm{\frac{1}{\mu}c + z}{2}}\nonumber\\
&=\sup_{m\in\bN}\sup_{\tilde{c }:\norm{{\tilde c }}{0}\leq m}\inf_{\delta\in\bR} \frac{\norm{\delta z+\tilde{c }}{1}}{m^{1/2}\norm{\tilde{c }+ z}{2}}\nonumber\\
&=
\sup_{m\in\bN}\sup_{I:|I|\leq m}
\sup_{c :\supp(c )\subseteq I}\inf_{\delta\in\bR} \frac{\norm{\delta z+c }{1}}{m^{1/2}\norm{c + z}{2}}\nonumber\\
%&=
%\sup_{m\in\bN}\sup_{I:|I|\leq m}
%\sup_{c :\supp(c )\subseteq I}\inf_{\delta\in\bR} \frac{\norm{\delta z+c }{1}}{m^{1/2}\norm{c + z}{2}}\nonumber\\
&=
\sup_{m\in\bN}\sup_{I:|I|\leq m}
\sup_{c :\supp(c )\subseteq I}\inf_{\delta\in\bR} \frac{\norm{\delta z_I+c }{1}+\norm{\delta z_{I^c}}{1}}{m^{1/2}\sqrt{\norm{c + z_I}{2}^2+\norm{z_{I^c}}{2}^2}}.\label{eq:CPhi}
\end{align}
To get a lower estimate for $C(\dict)$, we simply chose $c =-z_I$ 
 in \eqref{eq:CPhi}
to obtain
\begin{align}
C(\dict)&\geq \sup_{m\in\bN}\sup_{I:|I|\leq m} \inf_{\delta\in\bR}
\frac{|\delta-1|\norm{z_I}{1}+|\delta|\norm{z_{I^c}}{1}}{m^{1/2}\norm{z_{I^c}}{2}}\nonumber\\
&= \sup_{m\in\bN}\sup_{I:|I|\leq m}\min\bigg(
\frac{\norm{z_I}{1}}{m^{1/2}\norm{z_{I^c}}{2}},\frac{\norm{z_{I^c}}{1}}{m^{1/2}\norm{z_{I^c}}{2}}\bigg)\nonumber\\
&\geq \sup_{m\in\bN}\min\bigg(
\frac{\norm{z_m}{1}}{m^{1/2}\sigma_m(z)_2},
\frac{\sigma_m(z)_1}{m^{1/2}\sigma_m(z)_2}\bigg).\label{eq:Bern_lower}
\end{align}
Then clearly $C(\dict)<\infty$ implies condition \eqref{eq:Bl}.

If, in addition, we have $z\in\ell^p$ for some $0<p<1$, then it follows from standard results on nonlinear approximation with bases in $\ell^2$, see \cite{DeVore:1996aa}, that $m^{1/2}\sigma_m(z)_2\rightarrow 0$ as $m\rightarrow \infty$. Thus 
\[\frac{\norm{z_m}{1}}{m^{1/2}\sigma_m(z)_2}\rightarrow \infty,\]
and we conclude from \eqref{eq:Bern_lower}  that
\[\sup_{m\in\bN} \frac{\sigma_m(z)_1}{m^{1/2}\sigma_m(z)_2}<\infty.\]
\end{proof}

We now turn to a justification of Examples \ref{example:1} and \ref{example:2} using Propositions \ref{prop:A} and \ref{prop:B}.

\subsubsection{Examples \ref{example:1} and \ref{example:2} revisited }
We first verify the claim made in Example~\ref{example:1}.
 Given any dictionary $\dict$ with 
$N(\dict)=\Span\{z\}$, where $z=(z_j)_{j=1}^\infty\in\ell^p$, for some $0<p\leq 1$.
For any $\epsilon>0$, we modify $z$ as follows
\begin{itemize}
\item[1.] Choose $m_0\geq 2$ such that $\sum_{j=m_0}^\infty |z_j|^p<\epsilon/2$.
\item[2.] Choose a sequence $\{m_\ell\}_{\ell=1}^\infty$ satisfying $m_{\ell+1}/m_\ell\rightarrow \infty$. Notice that the sequence will necessarily have super-exponential growth.
\item[3.] Fix $\beta>1/p\geq 1$, and choose $\{\gamma_j\}_{j=0}^\infty$ such that $\gamma_\ell:=C(m_{\ell+1}-m_\ell)^{-\beta}$, with the constant $C$  defined by the equation $\sum_{\ell=0}^\infty \gamma_\ell^p[m_{\ell+1}-m_\ell]=\sum_{j=m_0+1}^\infty |z_j|^p$.
\item[4.] Now define $\tilde{z}=(\tilde{z}_j)_{j=0}^{\infty}$ by
$$\tilde{z}_j:=\begin{cases}
z_j,&0\leq j\leq m_0\\
\gamma_\ell,& j\in [m_{\ell}+1,m_{\ell+1}],\quad \ell\in \bN_0.
\end{cases}$$
\end{itemize}
It is easy to verify (using 1. and 3.) that $\norm{z-\tilde{z}}{p}^p<\epsilon$. 

Let us consider the index set $I=[1,m_\ell]$, $\ell\geq 1$. We have,
\begin{align*}
 \frac{\norm{\tilde{z}_{I^c}}{1}^2}{\norm{\tilde{z}_{I^c}}{2}^2}&\geq \frac{[(m_{\ell+1}-m_\ell)\gamma_\ell]^2}{C^2\sum_{k=\ell}^\infty (m_{k+1}-m_k)^{1-2\beta}}\\
&\geq \frac{(m_{\ell+1}-m_\ell)^{2-2\beta}}{(m_{\ell+1}-m_\ell)^{1-2\beta}} \\
&\geq m_{\ell+1}-m_\ell.
\end{align*}
Thus,
$$
\frac{\norm{\tilde{z}_{I^c}}{1}^2}{m_\ell\norm{\tilde{z}_{I^c}}{2}^2}
\geq \frac{m_{\ell+1}-m_\ell}{m_\ell}=
\frac{m_{\ell+1}}{m_\ell}-1
\rightarrow \infty,$$
as $\ell\rightarrow \infty$. Also, since $\beta>1$,
\begin{align*}
 \frac{\norm{\tilde{z}_{I}}{1}^2}{m_\ell\norm{\tilde{z}_{I^c}}{2}^2}&\geq \frac{C'}{C^2\sum_{k=\ell}^\infty (m_{k+1}-m_k)^{1-2\beta}}\\
&\geq \frac{C'}{m_\ell(m_{k+1}-m_k)^{1-2\beta}} \\
&\geq \frac{m_{\ell+1}-m_\ell}{m_\ell}\rightarrow \infty,
\end{align*}
as $\ell\rightarrow \infty$. We conclude that $C_2(\tilde{z})=\infty$,
with $C_2(\tilde{z})$ given in Proposition \ref{prop:B}.

To verify the claim made in Example \ref{example:2}, suppose $\dict$ satisfies
$N(\dict)=\Span\{z\}$, where $z=(z_j)_{j=1}^\infty$  is such that  there is a constant $C<\infty$  satisfying  
\[
\forall k\in\bN: \sum_{j=k}^\infty |z_j|\leq C|z_k|.
\]
Then for any finite index set $I\subset \bN$, we let $m_I:=\min\{k:k\not \in I\}$, and notice that
$\norm{z_{I^c}}{1}\leq \sum_{j=m_I}^\infty |z_j|\leq  C|z_{m_I}|$, while 
$\norm{z_{I^c}}{2}\geq |z_{m_I}|$. Hence,
$$\frac{\norm{z_{I^c}}{1}}{\norm{z_{I^c}}{2}}\leq C\frac{|z_{m_I}|}{|z_{m_I}|}=C,$$
and
\[
\sup_{m\in \bN}\sup_{I:|I|\leq m} \frac{\norm{z_{I^c}}{1}}{m^{1/2}\norm{z_{I^c}}{2}}\leq C<\infty,
\]
so \eqref{eq:B_upper} is satisfied and the Bernstein inequality \eqref{eq:Bern}  holds by Proposition \ref{prop:A}.

\section{Bernstein inequality and the RIP}
\label{sec:finitedim}

For certain incoherent dictionaries studied in \cite{Gribonval:2006aa}, the Berstein inequality {\em cannot} match the Jackson inequality, but it still holds with a sharp exponent $r = 2(1/\tau-1/2)$ for any $\tau \leq 2$, i.e. the sharp factor that can be used in Theorem~\ref{th:Interpolation} is $\beta=2$. This result exploits incoherence \cite[Lemma 2.3]{Gribonval:2006aa} to prove that the lower bound in the RIP is satisfied for $k$ of the order of $\sqrt{N}$. 
Below we prove that the lower frame bound~\eqref{eq:DefFrameBounds}, together with the lower bound in the RIP~\eqref{eq:RIP21} with $k$ {\em of the order of $N$}, implies the Bernstein inequality~\eqref{eq:DefBineq} with controlled constant and exponent matching that of the Jackson inequality~\eqref{eq:DefJineq}. This Lemma therefore extends our previous result based on incoherence \cite[Theorem 2.1]{Gribonval:2006aa}.
\begin{lemma}
\label{le:RIPBineq}
Let $\dict$ be an $m\times N$ dictionary. Suppose $\dict$ has lower frame bound $\lfbound>0$  and satisfies  $\LRIP(\slevel N,\ripc)$, where $\ripc<1$ and $0<\slevel <1$. Then for $0<\tau\leq 2$, the Bernstein inequality~\eqref{eq:DefBineq} holds with exponent $r = 1/\tau-1/2$ and constant
\begin{equation}\label{eq:ConstantBernstein}
C_\tau(\lfbound,\ripc,\slevel):=\max\big\{(1-\ripc)^{-1/2},\lfbound^{-1/2}\slevel^{1/2-1/\tau}\}.
  \end{equation}
 \end{lemma}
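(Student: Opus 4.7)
The plan is to bound $\|f_k\|_{\ell^\tau(\dict)}$ by instantiating the infimum in the definition of the sparsity norm with two different representations, chosen according to whether $k$ is small or large relative to $\kappa N$. First I would note the elementary Hölder-type inequality: for any $c \in \bR^N$ supported on a set of size at most $n$ and any $0<\tau\leq 2$,
\[
\norm{c}{\tau} \leq n^{1/\tau-1/2}\,\norm{c}{2}.
\]
This will convert an $\ell^2$ control on a representation into the desired $\ell^\tau$ control, at the cost of a factor $n^{r}$ with $r=1/\tau-1/2$.

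Given $f_k\in\Sigma_k(\dict)$, fix a representation $f_k=\dict c$ with $\norm{c}{0}\leq k$. The first case is $k\leq \slevel N$. Then $c$ is $\slevel N$-sparse, so the hypothesis $\LRIP(\slevel N,\ripc)$ yields $\norm{c}{2}\leq (1-\ripc)^{-1/2}\norm{\dict c}{\cH}$, and applying the Hölder bound above with $n=k$ gives
\[
\norm{f_k}{\ell^\tau(\dict)}\leq \norm{c}{\tau}\leq k^{r}(1-\ripc)^{-1/2}\norm{f_k}{\cH}.
\]
The second case is $k>\slevel N$. Here the sparse representation no longer benefits from the RIP, so I would instead use the minimum-$\ell^2$-norm representation $c^\star=\dict^+ f_k$, whose existence and bound $\norm{c^\star}{2}\leq \lfbound^{-1/2}\norm{f_k}{\cH}$ follow from the lower frame bound. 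Since $c^\star\in\bR^N$, the Hölder bound with $n=N$ gives $\norm{c^\star}{\tau}\leq N^{r}\norm{c^\star}{2}$; substituting $N<k/\slevel$ produces $N^{r}\leq \slevel^{-r}k^{r}=\slevel^{1/2-1/\tau}k^{r}$. Hence
\[
\norm{f_k}{\ell^\tau(\dict)}\leq \norm{c^\star}{\tau}\leq \lfbound^{-1/2}\slevel^{1/2-1/\tau}\,k^{r}\norm{f_k}{\cH}.
\]

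Taking the maximum of the two constants yields exactly $C_\tau(\lfbound,\ripc,\slevel)$ as in \eqref{eq:ConstantBernstein}, completing the proof. There is no real obstacle here: the only subtlety is recognizing that the infimum in the definition of $\norm{\cdot}{\ell^\tau(\dict)}$ licenses using a different representation in each regime, sparse when RIP applies and dense-but-$\ell^2$-minimal when $k$ exceeds the RIP range, with the lower frame bound playing the role of a crude global substitute for RIP in the second regime.
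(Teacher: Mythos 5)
Your proof is correct and follows essentially the same route as the paper: split on $k\lessgtr \slevel N$, use the lower RIP bound on the given sparse representation when $k\leq \slevel N$, and otherwise switch to a dense representation controlled in $\ell^2$ by the lower frame bound $\lfbound$. Your minimum-$\ell^2$-norm coefficients $c^\star=\dict^+ f_k$ are exactly the canonical dual-frame coefficients $\{\langle f_k,\tilde\phi_j\rangle\}$ the paper uses (the dual frame having upper bound $\lfbound^{-1}$ is the same fact as $\norm{c^\star}{2}\leq \lfbound^{-1/2}\norm{f_k}{\cH}$), so the two arguments coincide.
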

\begin{proof}%[Proof of Lemma~\ref{le:RIPBineq}]
First, suppose $1\leq k\leq \slevel N$. Take $f\in\Sigma_k(\dict)$, and write
$f=\dict c$ with $\norm{c}{0}\leq k$. Then, by the  $\LRIP(\slevel N,\ripc)$ condition,
\[
\norm{f}{\ell^\tau(\dict)}\leq
\norm{c}{\tau}\leq 
k^{1/\tau-1/2}\norm{c}{2}\leq 
(1-\ripc)^{-1/2}k^{1/\tau-1/2} \cdot \norm{\dict c}{\cH}.
\]
For $\slevel N\leq k\leq N$, take $f\in\Sigma_k(\dict)$. We express $f$ in terms of its canonical frame expansion relative to $\dict$,
\begin{equation}\label{eq:frame}
  f=\sum_{j=1}^N \langle f,\tilde{\phi}_j\rangle \phi_j.
  \end{equation}
We recall that the dual frame $\{\tilde{\phi}_j\}$ has an upper frame bound $\lfbound^{-1}$. Hence, we can use the expansion \eqref{eq:frame} to deduce that
\begin{align*}
  \norm{f}{\ell^\tau(\dict)}&\leq
\norm{\{\langle f,\tilde{\phi}_j\rangle\}}{\tau}
\leq N^{1/\tau-1/2}
\norm{\{\langle f,\tilde{\phi}_j\rangle\}}{2}\\&
\leq 
\lfbound^{-1/2}N^{1/\tau-1/2} \cdot \norm{\dict c}{\cH}\\
&\leq [\lfbound^{-1/2}\slevel^{1/2-1/\tau}] k^{1/\tau-1/2} \cdot \norm{\dict c}{\cH}.
\end{align*}
The Bernstein inequality and its constant now follow at once from the two separate estimates.
\end{proof}

Lemma~\ref{le:RIPBineq} proves half of Theorem~\ref{th:RIPimpliesCaract}. Let us complete the proof now.
\begin{proof}[Proof of Theorem~\ref{th:RIPimpliesCaract}]
As we have seen, the lower frame bound and the $\LRIP(\slevel N,\ripc)$ property imply the Bernstein inequality for all $0<\tau \leq 2$. Moreover the upper frame bound implies a Jackson inequality. For $0 < p < \tau \leq1$ both the Jackson and Bernstein inequalities hold for $\ell^p(\dict)$ with exponent $1/p-1/2$, hence \cite[Chapter 7]{DeVore:1993aa} we have with equivalent norms
\[
\cA_\tau^r(\dict) = (\cH,\ell^p(\dict))_{\theta,\tau},\quad r = \theta (1/p-1/2),\quad 0<\theta<1.
\]
The Bernstein inequality also holds for $\ell^\tau(\dict)$ with exponent $r = 1/\tau-1/2$, hence by Theorem~\ref{th:FullSpaceEquality} $\cA_\tau^r(\dict) = \ell^\tau(\dict)$ with equivalent norms.
\end{proof}

Next we wish to estimate $\lfbound,\ufbound,\ripc,\slevel$ when $\dict$ is a random Gaussian dictionary.
The following Lemma summarizes well known facts (see e.g.~\cite{Candes:2006aa,Richard-Baraniuk:2008aa}).
\begin{lemma}
\label{le:RIPRandom}
Let $\dict$ be an $m \times N$ matrix with i.i.d. Gaussian entries $\mathcal{N}(0,1/m)$.
For any $\epsilon > 0$ and $1 \leq k < m$, it satisfies the $\LRIP(k,\ripc)$ with $1-\ripc = (1-\eta)^2$, where
\begin{equation}
\label{eq:DefLRIPConstRandom}
\eta := 
\sqrt{\frac km} \cdot 
\left(1+ (1+\epsilon) \cdot 
\sqrt{2 \cdot \left(1+ \log \frac{N}{k}\right) }\right)
\end{equation}
except with probability at most 
\begin{equation}
\exp\left( -2 \epsilon k \cdot (1+ \log \frac{N}{k})\right).
\end{equation}
Moreover, except with probability at most $\exp(-\epsilon^2 m/2)$, it has the lower frame bound
\begin{align}
\lfbound & \geq  (\sqrt{N/m}-1-\epsilon)^2\\
\intertext{and, except with probability at most $\exp(-\epsilon^2 m/2)$, it has the upper frame bound}
\ufbound & \leq   (\sqrt{N/m}+1+\epsilon)^2
\end{align}
\end{lemma}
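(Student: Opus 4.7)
The plan is to regard the lemma as an organized compilation of Davidson--Szarek / Gordon singular-value concentration bounds applied once to the full $m\times N$ matrix (for the frame bounds) and then to each $m\times k$ column submatrix together with a union bound over supports (for the LRIP). Throughout, the entries being $\mathcal{N}(0,1/m)$ will be handled by rescaling: if $G$ denotes the matrix with $\mathcal{N}(0,1)$ entries of the same shape, then $\dict = G/\sqrt{m}$ and the extreme singular values rescale accordingly.

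For the frame bounds I would apply directly the concentration estimate for extreme singular values of an $N\times m$ standard Gaussian matrix $G$: with probability at least $1-\exp(-s^2/2)$,
\[
\sqrt N - \sqrt m - s \;\leq\; \sigma_{\min}(G) \;\leq\; \sigma_{\max}(G) \;\leq\; \sqrt N + \sqrt m + s.
\]
Since $\norm{\dict^T f}{2}^2 = f^T \dict \dict^T f$, the frame bounds are exactly $\lfbound = \sigma_{\min}(\dict^T)^2$ and $\ufbound = \sigma_{\max}(\dict^T)^2$. Dividing through by $\sqrt m$ and choosing $s = \epsilon\sqrt m$ yields the stated inequalities, each with exceptional probability at most $\exp(-\epsilon^2 m/2)$.

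For the $\LRIP(k,\ripc)$ I would fix a support $T\subset\{1,\dots,N\}$ of size $k$, observe that $\dict_T$ is $m\times k$ Gaussian of the same scale, and apply the Davidson--Szarek bound to $\dict_T$: with probability at least $1-\exp(-mt^2/2)$,
\[
\sigma_{\min}(\dict_T) \;\geq\; 1 - \sqrt{k/m} - t,
\]
so $\norm{\dict_T c}{2}^2 \geq (1-\eta_T)^2 \norm{c}{2}^2$ with $\eta_T = \sqrt{k/m}+t$ for every $c$ supported on $T$. Then I would union-bound over the $\binom{N}{k}\leq (eN/k)^k$ possible supports, giving total failure probability at most
\[
\exp\!\Big(k(1+\log(N/k)) - mt^2/2\Big).
\]
Choosing $t = \sqrt{2(1+2\epsilon)(1+\log(N/k))\cdot k/m}$ makes this bound equal $\exp(-2\epsilon k(1+\log(N/k)))$, the required exceptional probability. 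With this choice $\eta_T \leq \sqrt{k/m}\bigl(1+\sqrt{(1+2\epsilon)\cdot 2(1+\log(N/k))}\bigr)$, and the elementary inequality $\sqrt{1+2\epsilon}\leq 1+\epsilon$ (valid for $\epsilon\geq 0$) brings this within the $\eta$ displayed in \eqref{eq:DefLRIPConstRandom}. Setting $1-\ripc = (1-\eta)^2$ completes the LRIP statement.

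No step is genuinely an obstacle, since all three estimates are well-documented in the references cited; the only minor bookkeeping point is making the exponents match cleanly, which is where the $(1+\epsilon)$ factor inside \eqref{eq:DefLRIPConstRandom} comes from, and verifying that $\sqrt{1+2\epsilon}\leq 1+\epsilon$ so the advertised form of $\eta$ upper bounds what the union bound actually delivers.
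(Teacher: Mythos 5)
Your proposal is correct and follows essentially the same route as the paper: the Davidson--Szarek singular-value bound applied to each $m\times k$ submatrix with a union bound over the $\binom{N}{k}\leq (eN/k)^k$ supports for the LRIP, and the same bound applied to the rescaled $N\times m$ transpose for the frame bounds; the only difference is cosmetic, since you place the slack in the choice $t=\sqrt{2(1+2\epsilon)(1+\log(N/k))k/m}$ and then use $\sqrt{1+2\epsilon}\leq 1+\epsilon$, whereas the paper takes $t=(1+\epsilon)\sqrt{2(1+\log(N/k))k/m}$ and absorbs $(1+\epsilon)^2\geq 1+2\epsilon$ into the probability. One small wording fix: the two extreme-singular-value bounds each fail with probability at most $\exp(-s^2/2)$, so state them one-sided at a time (as the lemma does) rather than as a single two-sided event with that same probability.
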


\begin{proof}
First, for a given index set $\Lambda$ of cardinality $k < m$, we observe that the restricted matrix $\dict_\Lambda$ is $m \times k$ with i.i.d. Gaussian entries $\mathcal{N}(0,1/m)$, hence its smallest singular value exceeds $1-\sqrt{k/m}-t$ except with probability at most $\exp(- m t^2/2)$ \cite[Theorem II.13]{K.-R.-Davidson:2001aa}. By a union bound, the smallest singular values among all submatrices $\dict_\Lambda$ associated to the $\binom{N}{k}$ possible index sets $\Lambda$ of cardinality $k$ exceeds $1-\sqrt{k/m}-t$, except with probability at most
\(
p(t) := \binom{N}{k} \cdot \exp(- m t^2/2).
\)
Since for all $N,k$ we have $\binom{N}{k} \leq (Ne/k)^k = \exp\left( k \cdot (1+ \log \frac{N}{k}) \right)$, it follows that
\[
p(t) \leq \exp\left( k \cdot (1+ \log \frac{N}{k}) - m t^2/2\right).
\]
For $\epsilon>0$ we set
\[
t := (1+\epsilon) \cdot \sqrt{\textstyle \frac{2k}{m} \cdot \left(1+ \log \frac{N}{k}\right) }
\]
and obtain that, except with probability at most 
\begin{eqnarray*}
p(\epsilon)
&\leq&
\exp\left( k \cdot (1+ \log \frac{N}{k}) \cdot \left(1- (1+\epsilon)^2 \right)\right) 
\leq
\exp\left( -2\epsilon k \cdot (1+ \log \frac{N}{k})  \right)
\end{eqnarray*}
we have: for all $k$-sparse vector $c$ with $\norm{c}{0} = k$,
\begin{eqnarray*}
\norm{\dict c}{2} 
&\geq&
\left[
1-\sqrt{k/m} \cdot \left(1+ (1+\epsilon) \cdot 
\sqrt{2 \cdot \left(1+ \log \frac{N}{k}\right) }
\right)
\right] \cdot \norm{c}{2}.
\end{eqnarray*}
To control the frame bounds we consider the random matrix $\mathbf{\Psi} := \sqrt{\frac{m}{N}} \dict^T$. Since $\mathbf{\Psi}$ is $N \times m$ with i.i.d. Gaussian entries $\mathcal{N}(0,1/N)$, for any $t>0$ all its singular values exceed $1-\sqrt{m/N}-t$, except with probability at most $\exp(- N t^2/2)$ \cite[Theorem II.13]{K.-R.-Davidson:2001aa}. Setting $t = \epsilon \cdot \sqrt{m/N}$, since
\(
\|\dict^Tx\|_2^2 = \frac{N}{m} \|\mathbf{\Psi} x\|_2^2,
\)
we obtain that $\dict$ has lower frame bound
\[
\sqrt \lfbound  \geq  \sqrt{\frac{N}{m}} \cdot \left(1-(1+\epsilon) \cdot \sqrt{m/N}\right) = \sqrt{N/m}-1-\epsilon
\]
except with probability at most $\exp(-\epsilon^2 m/2)$. We proceed identically for the upper frame bound, using the fact that for any $t>0$, no singular value of $\mathbf{\Psi}$ exceeds $1+\sqrt{m/N}+t$, except with probability at most $\exp(- N t^2/2)$ \cite[Theorem II.13]{K.-R.-Davidson:2001aa}.
\end{proof}

We now obtain our first main theorem (Theorem~\ref{th:FullSpaceEquality}) by controlling the constant $\ripc$ from below when $k/m$ is bounded from above, given the redundancy $\redund = N/m$ of the dictionary $\dict$. 
\begin{proof}[Proof of Theorem~\ref{th:FullSpaceEquality}]
In Appendix~\ref{app:IncreasingFunction} we exhibit a threshold $t(\redund) \in (0,1)$ such that if  $N/m = \redund$ and $t= k/m \leq t(\redund)$ then
\[
\eta := \sqrt{\frac km} \cdot 
\left(1+ 2 \cdot 
\sqrt{2 \cdot \left(1+ \log \frac{N}{k}\right) }\right)
\leq 1/2.
\]
Consider $k := \lfloor t(\redund) m \rfloor$. By Lemma~\ref{le:RIPRandom} the dictionary $\dict$ satisfies the $\LRIP(k,\ripc)$ with $(1-\ripc)^{-1/2} = (1-\eta)^{-1} =2$ except with probability at most
\begin{eqnarray*}
p_1 = \exp\left( -2 k \cdot \left(1+ \log \frac{N}{k}\right)\right) 
&\leq&
\exp\left( -2 \left[t(\redund) m -1\right]  \cdot (1+ \log \redund)\right)\\
& \leq&
e^{2(1+\log \redund)} \cdot \exp\big( -2 t(\redund) (1+ \log \redund) m\big)
\end{eqnarray*}
Moreover, setting $\epsilon := (\sqrt\redund-1)/2$, it has lower frame bound $\sqrt\lfbound \geq \sqrt{N/m}-1-\epsilon \geq (\sqrt\redund-1)/2$ except with probability at most $p_2 = \exp(-\epsilon^2 m/2)$. 
For $m \geq m(\redund) := 2/t(\redund)$ we have 
\[
\slevel = \frac kN = \frac km \cdot \frac mN \geq \frac{t(\redund)-\frac1m}{\redund} \geq \frac{t(\redund)}{2\redund}.
\]
and by Lemma~\ref{le:RIPBineq}, we obtain (except with probability at most $p_1 + p_2$) that the Bernstein inequality holds for each $0<\tau \leq 2$ with constant
\begin{equation}
\label{eq:DefBineqRandomConstant}
C_\tau(\redund) \leq \max\left(2,2(\sqrt{\redund}-1)^{-1} \cdot \left[\frac{t(\redund)}{2\redund}\right]^{1/2-1/\tau}\right).
\end{equation}
Since we also have the upper frame bound $\sqrt{\ufbound} \leq \sqrt \redund + 1 + \epsilon'$ except with probability at most $p_3 = \exp(-(\epsilon')^2 m/2)$ we obtain with $\epsilon'=1$ that the upper frame bound $\sqrt{\redund}+2$ together with the Bernstein inequality with constant $C_\tau(\redund)$ jointly hold, except with probability at most $p_1+p_2 + p_3 \leq \beta \exp(-\gamma m)$ where
\begin{eqnarray*}
\beta 
&=& e^{2+2\log \redund} + 2
=e^2 R^2 +2
 \leq (e^2+2) R^2 \leq 10\ \redund^2;
\\
\gamma
&\geq & 
\min\left(2t(\redund) \cdot (1+\log \redund), (\sqrt{\redund}-1)^2/8, 1/2\right) =: \gamma(\redund).
\end{eqnarray*}
As shown in Appendix~\ref{app:IncreasingFunction},  $\lim_{\redund \to \infty} \gamma(\redund) \approx 0.002$, and $\gamma(\redund) \geq 7 \cdot 10^{-6}$ when $\redund \geq 1.28$.
\end{proof}

\appendix
\section{}%Technical details}
\label{app:IncreasingFunction}
For $u \in (0,1)$ we have $u \log 1/u \leq e$, hence for $u\in(0,1)$ and $0<p\leq 1$:
\[
\log 1/u = (1/p) \log 1/u^p \leq (1/p) e/u^p = (1/u)^p (e/p) .
\]
Therefore, for $a>1$, $b\geq e$, $0<t<1$, using $u = t/b$, we obtain
\begin{eqnarray*}
\eta(t) 
&:=&
\sqrt{t} \cdot \left(1+a \sqrt{\log (b/t)}\right) \leq
\sqrt{t} \cdot \left( 1 + a \sqrt{(b/t)^{p}(e/p)}\right)\\
&\leq&
t^{\frac12-\frac p2} \cdot \left( t^{\frac p2} + ae^{\frac12} \sqrt{b^p/p}\right)
\leq
t^{\frac12-\frac p2} \cdot 2ae^{\frac12} \sqrt{b^p/p}
\end{eqnarray*} 
where in the last inequality we used the fact that $a e^{\frac 12} \sqrt{b^p/p} > 1$ (all the factors exceed one) 
and $t^{\frac 1p} < 1$.  For $p = 1/\log b$ we have $b^p/p = e \log b$ hence
\[
\eta(t) \leq 2 a e \sqrt{\log b} \cdot t^{\frac{1}{2} \left(1-\frac 1{\log b}\right)}
\]
The definition of $\eta(t)$ can be identified with~\eqref{eq:DefLRIPConstRandom} for $\epsilon=1$ with $t = k/m$,  $a = 2 \sqrt{2}$ and $b = eN/m =e \redund \geq e$. Denoting $c=4ae = 8\sqrt2 e$, we have just proved
\[
\eta(t) \leq (c/2) \cdot \sqrt{1 +\log \redund} \cdot t^{\frac{1}{2}\left(\frac{\log \redund}{1+\log \redund}\right)},\quad \forall 0<t<1.
\]
Defining
\begin{equation}
\label{eq:DefTRedund}
t(\redund) := \left[c^2 \cdot (1+\log \redund)\right]^{-1-\frac 1{\log \redund}} \in (0,1),
\end{equation} 
we have the guarantee $\eta\big(t(\redund)\big) \leq 1/2$ as well as the identity
\[
2 t(\redund) \cdot (1+\log \redund) 
=
2c^{-2} \cdot \left[c^2 \cdot (1+\log \redund)\right]^{-\frac{1}{\log \redund}}.
\]
The right hand side is an increasing function of $\redund$, with limit 
zero when $\redund \to 1$ and limit $2c^{-2}$ when $\redund \to \infty$. When $\redund  \geq \redund_0 := (1+4/c)^2$ we have $(\sqrt{\redund}-1)^2/8 \geq 2 c^{-2}$ hence
\[
\gamma(\redund) := \min\left(2 t(\redund) \cdot (1+\log \redund), (\sqrt \redund -1)^2/8 \right) = 2 t(\redund) \cdot (1+\log \redund).
\]
Since $c = 8\sqrt2 e = 2^{7/2} e$, we have $c^2 = 2^7 e^2$ hence 
\[
2c^{-2} = 2^{-6} e^{-2} \approx 0.0021 > 0.002,\ \mbox{and}\ \redund_0 = (1+\frac{1}{\sqrt{8}e})^2 \approx 1.277 > 1.27.
\]
For $\redund  \geq \redund_0$,
\[
\gamma(\redund) \geq 2 c^{-2} [c^2 \cdot (1+\log \redund_0)]^{-\frac1{\log \redund_0}} \approx 7.8 \cdot 10^{-6} > 7 \cdot 10^{-6}.
\]
and $\lim_{\redund \to \infty}  \gamma(\redund) = 2 c^{-2} > 2 \cdot 10^{-3}$. 
Finally, when $\redund \geq \redund_0$  we have $m(\redund) = 2/t(\redund) = 4 (1+\log \redund)/\gamma(\redund) \leq 6 \cdot 10^5 \cdot (1+\log \redund)$, and in the limit of large $\redund$ we obtain $m(\redund) \asymp 2 c^2 (1+\log \redund) \lesssim 2000 \cdot (1+\log \redund)$. 

\bibliographystyle{abbrv}
\bibliography{bernstein}

\end{document}